\newcommand{\Gk}[1]{\selectlanguage{polutonikogreek}#1\selectlanguage{english}}
\author{David Pierce}
\date{\today}
\title{Numbers}
\address{Mathematics Department, Middle East Technical University,
Ankara 06531, Turkey} 
\email{dpierce@metu.edu.tr}
\urladdr{http://metu.edu.tr/~dpierce/}
\newenvironment{sentenceenum}%
{\begin{enumerate}\renewcommand{\labelenumi}{\theenumi.}}{\end{enumerate}}
\newenvironment{clauseenum}%
{\begin{enumerate}}{\end{enumerate}}
\newenvironment{myquote}%
{\begin{quotation}\small}{\end{quotation}}
\renewcommand{\leq}{\leqslant}
\renewcommand{\geq}{\geqslant}
\renewcommand{\phi}{\varphi}
\renewcommand{\land}{\mathrel{\binampersand}}
\newcommand{\standard}[1]{\mathbb{#1}}
\newcommand{\N}{\standard N}
\newcommand{\Z}{\standard Z}
\newcommand{\R}{\standard R}
\newcommand{\B}{\standard B}
\newcommand{\universe}{\mathbf V}
\newcommand{\scr}{\mathsf s}
\newcommand{\scrt}{\mathsf t}
\newcommand{\pl}{\mathord+}
\newcommand{\str}[1]{\mathfrak{#1}}
\newcommand{\divides}{\mathrel|}
\newtheorem{theorem}{Theorem}
\newtheorem{lemma}{Lemma}
\newcommand{\lto}{\Rightarrow}
\newcommand{\Exists}[1]{\exists#1\;}
\newcommand{\pow}[1]{\mathscr P(#1)}
\newcommand{\powf}[1]{\mathscr P_{\mathrm f}(#1)}
\renewcommand{\emptyset}{\varnothing}
\newcommand{\vnn}{\upomega}
\newcommand{\pred}[1]{\operatorname{pred}(#1)}
\newcommand{\predk}[1]{\operatorname{pred}_k(#1)}
\newcommand{\predl}[1]{\operatorname{pred}_{\ell}(#1)}
\newcommand{\predo}[1]{\operatorname{pred}_0(#1)}
\newcommand{\sig}{\mathscr S}
\newcommand{\vnns}{\vnn_{\sig}}
\newcommand{\class}[1]{\bm{#1}}
\newcommand{\Ds}{\mathbf D_{\sig}}
\newcommand{\Dss}{\mathbf D_{\{1,\scr\}}}
\newcommand{\cF}{\class F}
\newcommand{\cL}{\mathbf L}
\newcommand{\cR}{\mathbf R}
\newcommand{\arity}{\operatorname{ar}}
\newcommand{\tm}[1][\sig]{\operatorname{Tm}^0(#1)}
\newcommand{\term}[3]{#1#2_0\cdots#2_{#3-1}}
\newcommand{\ft}{\term Ftn}
\newcommand{\fu}{\term Fun}
\newcommand{\hgt}{\operatorname{hgt}}
\newcommand{\on}{\ensuremath{\mathbf{ON}}}
\newcommand{\zclass}{\mathbf{Zm}}
\newcommand{\wf}{\ensuremath{\mathbf{WF}}}
\newcommand{\ons}{\ensuremath{\mathbf{ON}_{\sig}}}
\newcommand{\included}{\subseteq}
\newcommand{\Zn}[1][n]{\Z/(#1)}
\renewcommand{\setminus}{\smallsetminus}
\newcommand{\chf}[1]{\chi_{#1}}
\newcommand{\tuple}[1]{\bm{#1}}
\newcommand{\gel}{\mathscr G}  
\begin{document}


  \maketitle

\begin{myquote}
  \textsc{There is no more impressive form} of literature than the
  narrative epic poem.  That combination of depth and breadth of
  conception which some have called sublimity has here found a natural
  and adequate expression.  The theory of number is the epic poem of
  mathematics.  The mutual reflection of the two arts will supply a
  sort of explanation in the intellectual dimension of the epic
  quality in both\dots

But the question, what is a number? is an invitation to analyze
counting and to find out what sort of thing makes counting possible.
In poetry I suppose the corresponding question would be, what makes
recounting possible?  The answer, if it were to be complete, would take
us into the most abstract and subtle mathematical thought. But the key
to the problem is the simplest sort of insight.  The same peculiar
combination of simplicity and subtlety is involved in the theory of
narrative, but as everyone knows, insight here belongs to the most
common of common-sense conceptions.

\vspace{0.5\baselineskip}
\mbox{}\hfill---Scott Buchanan, \emph{Poetry and Mathematics}
\cite[p.~64]{Buchanan}
\end{myquote}

\tableofcontents

\setcounter{section}{-1}

\section{Numbers and sets}\label{intro}

The set $\N$ of \textbf{natural numbers} is described by
the so-called \textsl{Peano Axioms}~\cite{Peano}.  These refer to the
following three 
features of $\N$.
\begin{sentenceenum}
  \item
It has a distinguished \textbf{initial element,} denoted by~$1$ or~$0$,
depending on the writer.
\item
It has a distinguished operation of \textbf{succession,} which
converts an element into its \textbf{successor.}  The successor of $x$ is often denoted by
$x+1$, though I shall also use $\scr(x)$ and $x^{\scr}$.
\item
In particular, the operation of succession takes a \emph{single}
argument (unlike addition or multiplication, which takes two arguments). 
\end{sentenceenum}
The class $\universe$ of all \textsl{sets}\footnote{That is,
  \textsl{hereditary} sets, namely sets whose elements are hereditary
  sets \cite[p.~9]{MR85e:03003}.} also has three features.  I shall develop an analogy between the three features of $\N$ listed above and the following three features of $\universe$.
\begin{sentenceenum}
  \item
It has a distinguished element, the empty set, denoted by~$\emptyset$
or~$0$. 
\item
It has one kind or `type' of non-empty set.
This is by the \textbf{Extension Axiom,} according to which a
set is determined solely by its elements (or lack of elements).
If sets are considered as boxes of contents, then
the contents may vary, but the boxes themselves are always cardboard, say,---never plastic or steel.  
\item
Each non-empty
set has a single kind or `grade' of element; the elements are not sorted into
multiple compartments as in an angler's tackle-box.
\end{sentenceenum}

Von Neumann's definition of the \textsl{ordinal numbers}
\cite{von-Neumann} gives us, in particular, the natural numbers.  The
von Neumann definition of the natural numbers can be seen as a
construction of~$\N$
  within~$\universe$ by means of the analogy between $\N$ and
  $\universe$ just suggested.  In general,~$\N$
  can be understood as a \textsl{free algebra} whose \textsl{signature}
  comprises the constant (or nullary function-symbol)~$1$ and the
singulary\footnote{Although the term 
  `unary' is commonly used now in this context, I follow Quine,
  Church~\cite[n.~29, p.~12]{MR18:631a}, and Robinson~\cite[pp.~19,
    23]{MR1373196} in preferring the term   
  \emph{singulary.}  The first five Latin distributive numbers are 
  \emph{\mbox{singul-,} \mbox{bin-,} \mbox{tern-,} \mbox{quatern-,}} and
  \emph{quin-}~\cite{LatinDili}, and these are
  apparently sources for the terms \emph{binary, ternary, quaternary,}
  and \emph{quinary.} 
The Latin cardinals are \emph{un-, du-, tri-, quattuor, quinque.}
(The hyphens stand for variable case-endings.)}
function-symbol~$\scr$.    
Von Neumann's definition gives us the particular free algebra denoted
by $\vnn$, in which~$1$ is
interpreted as $\emptyset$, and~$\scr$ is interpreted as the function
$x\mapsto x\cup\{x\}$.  A similar set-theoretic construction will give us a
free algebra in an arbitrary algebraic signature, as long as there is
a \textsl{type} of set 
for each symbol in the signature, and sets having the type of an $n$-ary
symbol have~$n$ \textsl{grades} of members.  There is a corresponding
  generalization of the notion of an ordinal number as well.
I work out the details of this construction at the end of this
article, in Part~\ref{part:new}.
Meanwhile, in Part~\ref{part:misconceptions}, I consider what I
hold to be misconceptions about numbers.  It was these considerations that led to the whole of this paper.  In
Part~\ref{part:history}, I review the logical development of
numbers from a historical perspective.  

\part{}\label{part:misconceptions}

\section{Numbers and geometry}\label{no-geo}

It is worthwhile to pay close
attention to the fundamental properties of~$\N$.
We may learn these wrongly in school.  We
may be taught---rightly---that~$\N$ has the following properties:
\begin{sentenceenum}
  \item\label{item:ind}
It admits \textbf{proofs by induction:}  every subset that contains $1$ and
that contains $k+1$ whenever it contains $k$ is the whole set.
\item\label{item:wo}
It is \textbf{well-ordered:} every non-empty subset has a least element.
\item\label{item:si}
It admits \textbf{proofs by `complete' or `strong' induction:} every
subset that contains $1$ and that contains $k+1$ whenever it contains $1$,
\dots, $k$ is the whole set. 
\item\label{item:rec}
It admits \textbf{recursive definitions} of functions: on it, there is a
unique function $h$ such that $h(1)=a$ and $h(k+1)=f(h(k))$, where $f$
is a given operation on a set that has a given element $a$.
\end{sentenceenum}
We may also be taught that these properties are \emph{equivalent.}
But they are \emph{not} equivalent.  Indeed, it is `not even
wrong' to say that they are equivalent, since properties~\ref{item:ind}
and~\ref{item:rec} are possible properties of algebras in the signature $\{1,\scr\}$,
while~\ref{item:wo} is a possible property of ordered sets, and~\ref{item:si}
is a possible property of ordered algebras.

A well-ordered set without a greatest element can be made into
an algebra in the signature $\{1,\scr\}$ by letting $1$ be the least element and letting $\scr(x)$ be the least element that is greater than $x$;
but this algebra need not admit induction in the sense
of~\ref{item:ind}.
Indeed, an example of such an algebra is the algebra of ordinals that are less than $\vnn+\vnn$.
Conversely, an algebra in the signature
$\{1,\scr\}$ that admits induction need not admit an
ordering~$<$ such that $x<\scr(x)$: finite cyclic groups provide examples.

The four properties listed above can \emph{become} equivalent under additional 
assumptions; but these assumptions are not usually made explicit.
Such assumptions are made explicit below in Theorems~\ref{thm:rec}
and~\ref{thm:order}.  I present these theorems, not as anything new,
but as being insufficiently recognized.

Perhaps the real problem is a failure to distinguish between what I shall call
\emph{naturalistic} and \emph{axiomatic} approaches to mathematics.  In the former,
mathematical objects exist out in nature; we note some of
their properties, while perhaps 
overlooking other properties that are too obvious to 
mention.  There is nothing wrong with this.
Euclid's \emph{Elements} \cite{MR17:814b,MR1932864} is naturalistic, and
beginning mathematics students would be better served by a
course of reading Euclid than by a course in analytic geometry or in
the abstract concepts of sets, relations, functions, and symbolic logic.

Not all of Euclid's propositions follow logically from his
postulates 
and common notions.  He never claims that they
do.\footnote{Heath~\cite[I, pp.~221~f.]{MR17:814b} discusses the
  possibility that the the so-called Common Notions in the
  \emph{Elements} are a later interpolation.}
His Proposition I.4
establishes the `side-angle-side' condition for congruence of
triangles, but the proof relies on `applying' one triangle to
another.  Perhaps this `application' is alluded to in his
Common Notion~4: `Things which coincide with one another are equal to
one another.'  
Suppose indeed that in
triangles $ABC$ and $DEF$ in the figure below, 
\begin{figure*}[ht]
  \begin{pspicture}(-0.5,-0.5)(5.5,3.5)
    \psline(0,0)(4,0)(5,3)(0,0)
\uput[d](0,0){$A$}
\uput[d](4,0){$B$}
\uput[u](5,3){$C$}
  \end{pspicture}
  \begin{pspicture}(-0.5,-0.5)(5.5,3.5)
    \psline(0,0)(4,0)(5,3)(0,0)
\uput[d](0,0){$D$}
\uput[d](4,0){$E$}
\uput[u](5,3){$F$}
  \end{pspicture}
\end{figure*}
sides $AB$ and $DE$ are equal,
and $AC$ and $DF$ are equal, and angles $BAC$ and $EDF$ are equal.  If
the one triangle is `applied' to the other, $A$ to $D$ and $AB$ to
$DE$, then $B$ and $E$ will coincide, and also $C$ and $F$, by a sort
of converse to Common Notion 4 that Euclid does not make explicit:
things that are equal can be made to coincide.\footnote{Such
  coinciding, in the case of plane figures, may involve cutting and
  rearranging, as in the series of propositions that begins with
  I.35: parallelograms with the same base in the same parallels are
  equal.
  \begin{center}
\mbox{}\hfill
    \begin{pspicture}(-0.3,0)(1.5,1.2)
    \psline   (0,0)(-0.3,1)
    \psline (1.2,1) (1.5,0)
    \psline   (0,0) (0.1,1)
    \psline (1.6,1) (1.5,0)
    \psline   (0,0) (1.5,0)
    \psline(-0.3,1) (1.6,1)
  \end{pspicture}
\hfill
  \begin{pspicture}(-0.9,0)(2.7,1.2)
    \psline   (0,0)(-0.9,1)
    \psline (1.5,0)( 0.6,1)
    \psline   (0,0) (1.2,1)
    \psline (1.5,0) (2.7,1)
    \psline   (0,0) (1.5,0)
    \psline(-0.9,1) (2.7,1)
    \end{pspicture}
\hfill\mbox{}
  \end{center}}  
Then $BC$ and $EF$ 
coincide; indeed, as observed in a remark that is bracketed
in~\cite{MR17:814b}, but omitted in \cite{MR1932864}, if the two
straight lines do not coincide, then `two straight 
lines will enclose a space: which is impossible.'  Again, this is a
principle so obvious as not to be worth treating as a postulate.

Without any explicit postulates,
Euclid proves his number-theoretic propositions, 
from the Euclidean algorithm (VII.1--2) 
to the perfectness of the product of
$2^n$ and $1+2+4+\dotsb+2^n$ if the latter is prime (IX.36).

Hilbert's approach in \emph{The Foundations of Geometry}
\cite{MR0116216} is axiomatic.  Hilbert himself seems to see his
project as the same as Euclid's:
\begin{myquote}
  Geometry, like arithmetic, requires for its logical
development only a small number of simple,
fundamental principles. These fundamental principles
are called the axioms of geometry. The choice
of the axioms and the investigation of their relations
to one another is a problem which, since the time of
Euclid, has been discussed in numerous excellent
memoirs to be found in the mathematical literature.
This problem is tantamount to the logical analysis of
our intuition of space.

The following investigation is a new attempt to
choose for geometry a \emph{simple} and \emph{complete} set of
\emph{independent} 
axioms and to deduce from these the most important
geometrical theorems in such a manner as to
bring out as clearly as possible the significance of the
different groups of axioms and the scope of the conclusions
to be derived from the individual axioms.
\end{myquote}
However, it is not at all clear that Euclid is concerned with
relations among axioms.  Proving the \emph{independence}
of one axiom from others usually means showing that the latter have a model in
which the former is false.  Hilbert can do this, but Euclid makes no
suggestion of different models of
his postulates.  Euclid makes observations and deductions about
the world---the 
\Gk{g~h} of \Gk{gewmetr'ia}.\footnote{Euclid does not discuss his
  work, and in particular he does not use the word \Gk{gewmetr'ia}.
  Herodotus does use the word, a century or two earlier:
  \begin{quotation}
    For this cause Egypt was intersected [by canals].  This king
    [Sesostris] moreover (so they said) divided the country among all
    the Egyptians by giving each an equal square parcel of land, and
    made this his source of revenue, appointing the payment of a
    yearly tax.  And any man who was robbed by the river of a part of
    his land would come to Sesostris and declare what had befallen
    him; then the king would send men to look into it and measure the
    space by which the land was diminished, so that thereafter it
    should pay in proportion to the tax originally imposed.  From
    this, to my way of thinking, the Greeks learnt the \emph{art of
    measuring land} [\Gk{gewmetr'ih}]; the sunclock and the sundial,
    and the twelve 
    divisions of the day, came to Hellas not from Egypt but from
    Babylonia.\hfill \mbox{\cite[2.109]{Herodotus-Loeb}}
  \end{quotation}
For \Gk{gewmetr'ih} (the Ionic form of the word) here, other translators, as
    \cite{Herodotus-Penguin}, just use `geometry'.  It is too simple
    to say that geometry developed from surveying, since it was not
    necessary for geometry as we know it to develop at all.  On the
    other hand, if our experience of the world came entirely from
    gazing at the heavens, and if we nonetheless developed a geometry,
    then this would probably be what we in fact call
    spherical geometry.} 
By constrast, while Hilbert's axioms are
obviously true in the world of Euclid, Hilbert nonetheless feels the
need to establish their consistency by constructing a model.  His
model is an `analytic' one (in the sense of analytic geometry): it is based
on the field obtained from the number $1$ by closing under $+$, $-$,
$\times$, $\div$, and $x\mapsto\sqrt{1+x^2}$.  For
Hilbert, it seems, such a field is more real than Euclidean space, even though
our intuition for fields, and especially for taking square roots in
them, comes from geometry. 

The approach to~$\N$ in which
properties~\ref{item:ind}--\ref{item:rec} above are `equivalent' is
not quite 
naturalistic, not quite axiomatic.  Considered as axioms in the sense
of Hilbert,
the properties are not meaningfully described as equivalent.
But if the properties are
to be understood just as properties of the numbers that we grew up
counting, then it is also
meaningless to say that the properties are equivalent: they are just
properties of those numbers.  

An axiomatic approach to mathematics may lead to a constructive
approach, whereby we \emph{create} our objects of study.
Hilbert could have ignored his axioms and concentrated on his model of
them, although this did not happen to be his purpose.  Regarding~$\N$,
we may or may not wish ultimately to take a constructive approach,
creating a model of the Peano Axioms by, for example, the method of
von Neumann.  If we do take this approach, then we presumably have
recognized something more fundamental than the natural numbers:
perhaps sets, which we may have approached naturalistically or
axiomatically.  But then the construction of the natural
numbers may add to our understanding of, and confidence in, these
sets.  This is indeed how I see the von Neumann construction, along with the generalization in Part~\ref{part:new} below.

\section{Numbers, quasi-axiomatically}\label{sect:nqa}

Through Spivak's \emph{Calculus} \cite{Spivak-Calc}, I had
my first encounters with serious mathematics.  Because this book
is worthy of scrutiny, I examine here its treatment of
foundational matters.  In the beginning, Spivak takes the
real numbers as being out in nature.  He just calls 
them `numbers,' and in his first chapter he identifies twelve of their
properties.  These properties will, in his chapter 27, be recognized as
the axioms for ordered 
fields.  Meanwhile, in some proofs in his chapter 1, Spivak uses
assumptions that he goes on to 
identify as having been unjustified.  For example, to
prove that, if a product is zero, then one of the factors must be
zero, Spivak observes that, if $a\cdot b=0$, and $a\neq0$, then
$a^{-1}\cdot(a\cdot b)=0$.  Thus he
tacitly assumes that zero times anything is zero; on the
next page, he comes clean and proves this.  I find no fault here.

The difficulties for me arise in Spivak's chapter 2.  Here the various `sorts'
of numbers are introduced, starting with the natural numbers.  The
four properties of the natural numbers listed above in \S\ref{no-geo}
are stated, and it 
is said that, from each of the first three properties, the others can
be proved. 
For example, Spivak proves by induction that a set of natural numbers
without a least
element must be empty:  $1$ cannot be in the set, since
otherwise it would be the least element; and for the same reason, if
none of the numbers $1$, \dots,~$k$ is in the set, then $k+1$ is not
in the set.

The problem is that this argument uses more than induction: it uses
that the set of natural numbers is ordered so that
$n\leq k$ or $k+1\leq n$ for all elements $k$ and $n$.  The existence
of this ordering does not follow by
induction alone.  It \emph{does} follow, if one has the natural
numbers as real numbers.  Again, for Spivak, the real numbers are
just `numbers', so the natural numbers must be among these; but Spivak
does not emphasize this in the text.  In
exercise~25 at the end of chapter 2, an \textsl{inductive set} is defined
as a set of numbers that contains $1$ and is closed under addition of $1$.
The reader is invited to show that the set of numbers common to all
inductive sets is inductive.  It could be, but is not, made clear at
this point that the properties called~\ref{item:ind}
and~\ref{item:rec} in \S\ref{no-geo} above are
equivalent properties of \emph{inductive sets.}  

Spivak's \emph{Calculus} was a reference text and a source of
exercises for a two-year
high-school course taught by one Donald J. Brown.\footnote{This
  was at St.\ Albans School in Washington, D.C., 1981--3.} 
The main text for the course was 
notes that Brown wrote and distributed, either as typed and mimeographed
pages, or more usually as writing on the blackboard that we students
copied down. 
Brown's notes are in some ways more 
formally rigorous than Spivak.  From the beginning, they
give the real numbers explicitly as composing the complete ordered
field called $\R$. And yet the notes present the natural numbers as a set
$\{1,2,3,\dots\}$ for which the `fundamental axiom' is the `Well
Ordering Principle'.  From this, the `Principle of Mathematical
Induction' is obtained as a theorem; it is left as an exercise to
obtain the Well Ordering Principle as a theorem from the Principle of
Induction.  Here again, proofs must rely on hidden
assumptions.  At the relevant point in my copy of Brown's
text, I find the following remark, apparently added by me during the
course:  `If the given ``definition'' of~$\N$ means anything, it
must be the equivalent of the Principle of Mathematical Induction.
Thus the P. of M.I. need not be a theorem, and the W.O.P. need not be
an axiom.'  


\part{}\label{part:history}

\section{Numbers, recursively}\label{sect:rec}

Let us forget about $\R$ and try to start from scratch.
We should distinguish clearly between {recursion} from {induction.}
\textbf{Recursion} is a method of \emph{defining} sets and functions;
\textbf{induction} is a method of \emph{proving} that one set is equal
to another.\footnote{One text that makes the distinction clear is
  Enderton \cite[\S 1.2, pp.~22-30]{MR0337470}.}
We can try to understand~$\N$
as having the following \textbf{recursive definition:}
\begin{clauseenum}
  \item\label{N1}
$\N$ contains $1$;
\item\label{N2}
if~$\N$ contains $x$, then~$\N$ contains $x^{\scr}$.
\end{clauseenum}
Since this is explicitly a definition, there is really no need for a
third condition, though it is sometimes given:
\begin{clauseenum}\setcounter{enumi}2
  \item
$\N$ contains nothing but what it is required to contain by~\eqref{N1}
and~\eqref{N2}. 
\end{clauseenum}
Again, I hold this third condition to be redundant for the same
reason that the words \emph{only if} are redundant when, for example,
we define a natural number as \textsl{prime} if and only if it has just
two distinct divisors. 

The recursive definition of~$\N$ means no more nor less than that, if
$A$ is a subset of~$\N$ that contains $1$ and that contains the
successors of all of its elements, then $A=\N$.
In short, the definition means that~$\N$ \textbf{admits proofs by
  induction.}  This can also be expressed
by writing
\begin{equation*}
  \N=\{1,1^{\scr},1^{\scr}{}^{\scr},\dots\}.
\end{equation*}
The recursive definition of~$\N$ does not bring~$\N$ into existence.
Rather, it picks out~$\N$ from among some things that are assumed
to exist already.  The definition assumes that there is \emph{some}
set that contains $1$ and is closed under $\scr$.  Then~$\N$ is the
smallest such set, namely the intersection of the collection of such
sets. 

Definitions such as the one just given have been found objectionable
for being `impredicative.'  Zermelo
\cite[p.~191, n.~8]{Zermelo-new-proof}
reports that Poincar\'e had such an objection to Zermelo's proof of
the Schroeder--Bernstein Theorem.  Let us have a look at this proof.

Suppose $A$, $B$, and $C$ are sets such that $A\included B\included
C$, and $f$ is a bijection from $C$
onto~$A$.  We aim to find a bijection $g$ from $B$ onto $A$.  If there
is any hope of finding such~$g$, then probably we should have
\begin{equation}\label{gBA}
  g(x)=
  \begin{cases}
    f(x),&\text{ if }x\in D,\\
x,&\text{ if } x\in B\setminus D,
  \end{cases}
\end{equation}
for some subset $D$ of $B$.  In this case, since the range of $g$ is
to be (a subset of) $A$, we must have in particular $B\setminus D\included A$ and so
$B\setminus A\included D$.  Injectivity of $g$ will follow from having
$f[D]\included D$.  Let $D$ now be the \emph{smallest} $X$ such
that 
\begin{equation*}
  (B\setminus A)\cup f[X]\included X.
\end{equation*}
That is, let $D$ be the intersection of the set of all such sets $X$.
We obtain $g$ as desired.  Indeed, when $D$ is such, then $(B\setminus
A)\cup f[D]=D$.  Since also $F[D]\included A$, we have that $B\setminus A$
and $f[D]$ are disjoint.  Therefore $B$ is the disjoint union of
$B\setminus A$, $f[D]$, and $B\setminus D$, as in the figure below.
We conclude $(B\setminus D)\cup f[D]=A$, so $g$ is indeed surjective
onto~$A$.  
\begin{figure*}[ht]
  \begin{pspicture}(8,3)
    \psline(6,0)(8,0)(8,3)(6,3)(6,0)
\rput(7,1.5){$C$}
    \psline(3,0)(5,0)(5,2.25)(3,2.25)(3,0)
\rput(4,1.875){$B\setminus A$}
    \psline(3,1.5)(5,1.5)
\rput(4,1.125){$f[D]$}
    \psline(3,0.75)(5,0.75)
\rput(4,0.375){$B\setminus D$}
    \psline(0,0)(2,0)(2,1.5)(0,1.5)(0,0)
\rput(1,0.75){$A$}
  \end{pspicture}
\end{figure*}

Such is the proof in the article \cite[p.~208]{Zermelo-invest} of
Zermelo giving his axioms of set-theory, although Zermelo does not
give the argument in the heuristic\footnote{The historically correct
  word is \emph{analytic,} in view of remarks like this of Pappus:
  \begin{quote}
    Now \emph{analysis} [\Gk{>an'alusic}] is a method of taking that
    which is sought as though it were admitted and passing from it
    through its consequences in order to something which is admitted
    as a result of synthesis [\Gk{sunj'esic}]; for in analysis we
    suppose that which is sought to be already done, and we inquire
    what it is from which this comes about\dots and such a method we
    call analysis, as being a reverse solution [\Gk{>an'apalin
        l'usic}].\hfill\cite[p.~601]{MR13:419b} 
  \end{quote}
The style of geometry pioneered by Descartes is not so much analytic
as \emph{algebraic.}}
 style that I have used.  The
argument might be said to
assume the existence of what it purports to establish the existence
\emph{of,} since $D$ belongs to the set of which it is the intersection.  

For an alternative heuristic argument, again suppose $g$ is as in~\eqref{gBA}, so that
\begin{equation}\label{eqn:gB}
g[B]=f[D]\cup(B\setminus D).
\end{equation}
Say $g[B]=A$.
Then in particular $g[B]\included A$, so by~\eqref{eqn:gB} we have $B\setminus D\included A$, hence 
\begin{equation}\label{eqn:BAD}
B\setminus A\included D.
\end{equation}
Also, $A\included g[B]$, but by~\eqref{eqn:gB} we have $g[B]\included f[B]\cup(B\setminus D)$, hence 
\begin{equation}\label{eqn:AfB}
A\setminus f[B]\included B\setminus D.
\end{equation}
Since $(B\setminus A)\cup(A\setminus f[B])=B\setminus f[B]$, we now have 
\begin{align*}
  g[B\setminus f[B]]
&=g[(B\setminus A)\cup(A\setminus f[B])]&&\\
&=g[B\setminus A]\cup g[A\setminus f[B]]&&\\
&=f[B\setminus A]\cup (A\setminus f[B])&&\text{[by~\eqref{eqn:BAD} and~\eqref{eqn:AfB}]}\\
&=(f[B]\setminus f[A])\cup (A\setminus f[B])
=A\setminus f[A].&&
\end{align*}
Assuming $g$ is injective, we conclude
$g[f[B]]=f[A]$.  Now we can proceed as before, but with $f[B]$
and $f[A]$ in place of $B$ and $A$.  In particular, we replace~\eqref{eqn:gB} with
\begin{equation*}
g[f[B]]=f[D\cap f[B]]\cup(f[B]\setminus D)
\end{equation*}
and obtain $f[B]\setminus f[A]\included D$, and so forth.
We find ultimately
\begin{equation*}
  \bigcup\{B\setminus A,f[B]\setminus f[A],f[f[B]]\setminus
  f[f[A]],\dots\}\included D. 
\end{equation*}
Conversely, to ensure that $g[B]=A$, it suffices to let $D$ be this
union.  
Indeed, this is the same $D$ found in Zermelo's argument above, only
now it is found through a \textbf{recursive} procedure, as
$\bigcup\{B_n\setminus A_n\colon n\in\N\}$, where
\begin{align*}
  B_1&=B,& A_1&=A,& B_{\scr(n)}&=f[B_n],& A_{\scr(n)}&=f[A_n].
\end{align*}
Is this better than finding $D$ as the intersection of a set that contains $D$?
Well, it assumes the existence of $\N$, whose recursive definition was accused of being impredicative.  What is worse, the existence of the function
$n\mapsto(A_n,B_n)$ is not justified merely by the
possibility of defining $\N$ itself recursively as we did above. 
For this reason, I prefer Zermelo's argument.

I propose to refer to any set\footnote{Later the possibility of using a proper class here will be considered.} that has a distinguished element and a
distinguished singulary
operation as an \textbf{iterative structure.}\footnote{Stoll \cite[\S 2.1,
  p.~58]{MR83e:04002} 
uses the term `unary system'.}  The distinguished element
can be called~$1$, and the operation, $\scr$; or they can be called
$1^{\str A}$ and 
$\scr^{\str A}$, to avoid ambiguity, if the structure itself is $\str
A$, but there are other iterative structures around.\footnote{This is like calling me David, unless there is another David present,
in which case I may become David Pierce.}  
Then an iterative structure is just an algebra in the signature $\{1,\scr\}$.
An iterative structure
meets minimal requirements for repeated activity:
it gives us something---$\scr$---to do, and something---$1$---to do it
to. 

Every iterative structure has a unique smallest substructure, which is
the intersection of the collection of all substructures;\footnote{This assumes that the iterative structure is based on a set, rather than a proper class.} this intersection
therefore admits induction and can be recursively defined.  In
particular,~$\N$ is such an intersection; but~$\N$
has properties beyond admitting induction.  In particular, if $\str A$ is
another iterative structure, then there is a unique homomorphism $h$
from~$\N$ to $\str A$, given by the \textbf{recursive definition}
\begin{clauseenum}
  \item
$h(1)=1$, that is, $h(1^{\N})=1^{\str A}$;
\item
$h(x^{\scr})=h(x)^{\scr}$, that is, $h(\scr^{\N}(x))=\scr^{\str
  A}(h(x))$. 
\end{clauseenum}
In short,
\begin{equation}\label{h}
 h= \{(1,1),(1^{\scr},1^{\scr}),(1^{\scr}{}^{\scr},1^{\scr}{}^{\scr}),\dots\},
\end{equation}
where the left-hand entries in the ordered pairs are from~$\N$; the
right-hand, from $A$.

We have now merely \emph{asserted} the possibility of defining functions recursively on $\N$.
Arithmetic follows from this assertion: the binary
operations of addition, multiplication, and exponentiation can be
defined recursively in their right-hand arguments.  First we have
\begin{align}\label{eqn:rec+}
  m+1&=m^{\scr},& m+x^{\scr}&=(m+x)^{\scr},
\end{align}
that is, the function $x\mapsto m+x$ is the unique homomorphism from $(\N,1,\scr)$ to
$(\N,m^{\scr},\scr)$. 
Then
$\scr$ is $x\mapsto x+1$, and we
continue with the definitions
\begin{align}\label{rec.}
  m\cdot1&=m,& m\cdot(x+1)&=m\cdot x+m;\\\label{exp}
  m^1&=m,& m^{x+1}&=m^x\cdot m;
\end{align}
that is, the functions $x\mapsto m\cdot x$ and $x\mapsto m^x$ are the unique
homomorphisms from $(\N,1,\scr)$ to $(\N,m,x\mapsto x+m)$ and
$(\N,m,x\mapsto x\cdot m)$ respectively.

Skolem \cite{Skolem-foundations} develops arithmetic by recursion and
induction alone, though without discussing the logical relations
between these two methods and without mentioning homomorphisms as
such.  Skolem's purpose is to avoid logical quantifiers.  Instead of
defining $m<n$ to mean $m+x=n$ for \emph{some} $x$, he defines it by
saying that $m<1$ never, and $m<k+1$ if $m<k$ or $m=k$.  To see this
definition in terms of homomorphisms, let us denote the set $\{0,1\}$
by $\B$ for Boole.\footnote{From Boole himself \cite[p.~41]{Boole}:
  `We have seen that the symbols of Logic
  are subject to the special law,
  \begin{equation*}
    x^2=x.
  \end{equation*}
Now of the symbols of Number there are but two, viz.\ 0 and 1, which
 are subject to the same formal law.'}
Every binary
relation $R$ on~$\N$ corresponds to a binary \textsl{characteristic
  function} $\chf R$ from~$\N$ into $\B$ given by
\begin{equation*}
  \chf R(x,y)=1\iff x\mathrel Ry.
\end{equation*}
We are defining the function $x\mapsto\chf<(m,x)$ from $\N$ to $\B$,
which is described by the table below.
\begin{table*}[h]
\begin{equation*}
  \begin{array}{c||c|c|c|c|c|c|c|c}
        x &1&2&\dots&m-1&m&m+1&m+2&\dots\\\hline
\chf<(m,x)&0&0&\dots&  0&0&  1&  1&\dots
  \end{array}
\end{equation*}
\end{table*}
Unless $m=1$, the function is \emph{not} a homomorphism from $(\N,1,\scr)$
into $(\B,0,t)$ for any choice of~$t$, simply because there is no
operation $t$ on $\B$ that takes each entry in the bottom row of the
table to the next entry.  However, there is an operation on
$\N\times\B$ that takes each \emph{column} of the table to the next
column, namely
$(x,y)\mapsto\Bigl(x+1,\max\bigl(y,\chf=(m,x)\bigr)\Bigr)$.  Call this
operation $t$; then (assuming $m\neq1$) the function
$x\mapsto\bigl(x,\chf<(m,x)\bigr)$ is the unique homomorphism from
$(\N,1,\scr)$ to $\bigl(\N\times\B,(1,0),t\bigr)$.

If $f$ is an arbitrary function from $\N$ to an arbitrary set $A$,
then the function $x\mapsto\bigl(x,f(x)\bigr)$ is a homomorphism from
$(\N,1,\scr)$ to $\Bigl(\N\times
A,\bigl(1,f(1)\bigr),(x,y)\mapsto\bigl(x+1,f(x+1)\bigr)\Bigr)$.  But
to \emph{define} $f$ this way would be not only impredicative, but circular.
The operations on
$\N$ that G\"odel~\cite{Goedel-incompl} calls \textbf{recursive} are not
defined as homomorphisms.  Rather, the collection of recursive operations can itself be defined \textbf{recursively} as follows (and here I follow G\"odel in letting the initial element of $\N$ be $0$):
\begin{clauseenum}
  \item
constant operations and $\scr$ are recursive;
\item
compositions of recursive operations are recursive;
\item
if $\psi$ is an $(n-1)$-ary, and $\mu$ an $(n+1)$-ary, recursive operation
for some positive $n$, then the $n$-ary operation $\phi$ is recursive,
where, for each $n$-tuple $\tuple a$ of natural numbers, the function
$x\mapsto\bigl(x,\phi(x,\tuple a)\bigr)$ is the unique homomorphism from $(\N,0,\scr)$
to $\Bigl(\N\times\N,\bigl(0,\psi(\tuple a)\bigr),
(y,z)\mapsto\bigl(y+1,\mu(y,z,\tuple a)\bigr)\Bigr)$.
\end{clauseenum}
Thus the recursive operations compose a sort of algebra which is
closed both under composition and under the operation that produces
$\phi$ from $\psi$ and $\mu$.

\section{Numbers, axiomatically}\label{sect:ax}

\emph{Why} does $\N$ admit recursive definitions of functions?  The function
$h$ in~\eqref h can be understood as the smallest subset
of $\N\times A$ that contains $(1,1)$ and is closed under the operation
$(x,y)\mapsto(x^{\scr},y^{\scr})$.  This set is a \emph{relation} $R$
from~$\N$ to $A$, and it follows by induction that, for
each $x$ in~$\N$,
there is at least one $y$ in $A$ such that $x\mathrel Ry$.
If this $y$ is always unique, then $R$ is the desired homomorphism $h$.
However, uniqueness of $y$ does not follow by induction alone: it is necessary, as well as sufficient, that $1$ be not a successor in $\N$ and that immediate predecessors, when they do exist, be unique.  This is Theorem~\ref{thm:rec} below.

Dedekind~\cite[II (130), pp.~88--90]{MR0159773} observes the sufficiency of the additional conditions, along with the necessity of \emph{some} additional condition.
Indeed, let us say that an arbitrary iterative structure $\str A$
\textbf{admits recursion} if,
for every iterative structure~$\str B$, there is a unique homomorphism
from $\str A$ to $\str B$.  In particular, $\N$ admits recursion.
If also~$\str A$ admits recursion, then the
homomorphism from $\str A$ to~$\N$ 
must be the inverse of the homomorphism from $\N$ to $\str A$.  Indeed, the composition (in either sense) of the two homomorphisms is a homomorphism, but so is the identity, hence these must be equal.  Therefore
$\str A$ and $\N$ are isomorphic, and $\str A$
admits induction, simply because~$\N$ does.  However, as Dedekind notes, the
converse does not follow; $\str A$ may admit induction, but not recursion.
Indeed, for any natural
number~$n$, the quotient $\Zn$ can be
considered as an iterative structure whose initial element is $0$ and
in which the successor of $x$ is $x+1$.  Then $\Zn$
admits induction; but there is no homomorphism from $\Zn$
to $\Zn[m]$ unless $m\divides n$.

As Landau~\cite[Thms 4 \&\ 28; see also pp.~ix.~f.]{MR12:397m}\ shows
implicitly, the recursive
definitions of addition and multiplication in~\eqref{eqn:rec+}
and~\eqref{rec.} are in fact valid on arbitrary
iterative structures that admit induction.  Such structures may be
finite, as in the figure below (where $n$ may be $1$, and $m$ may be~$0$).
\begin{figure*}[ht]
    \begin{pspicture}(0,-1)(8.5,2.5)
    \psset{linewidth=0.5mm%
}
    \psline(0,2)(7,2)
    \psline{->}(5.5,0.5)(5.5,1.7)
    \psarc(7,0.5){1.5}{180}{450}
    \psdots(0,2)(1,2)(2,2)(5.5,2)(6.5,2)(5.5,1)
    \uput[u](0,2.1){$1$}
    \uput[u](1,2.1){$2$}
    \uput[u](2,2.1){$3$}
    \uput[u](5.5,2.1){$n$}
    \uput[ur](6.5,2.1){$n+1$}
    \uput[l](5.4,1){$n+m-1$}
  \end{pspicture}
\end{figure*}
Henkin~\cite{MR0120156}
makes the point explicit, while observing that the recursive definition of
exponentiation fails in some such
structures.  Indeed, since for Henkin the initial element of~$\N$ is
$0$, his definition of exponentiation is not~\eqref{exp}, but
\begin{align*}
  x^0&=\scr(0),& x^{\scr(u)}&=x^u\cdot x,
\end{align*}
which always fails in $\Zn$ in case $n>1$,
simply because the definition requires both $0^0=1$ and $0^n=0$.  
However, if we keep $1$ as the initial element of~$\N$, then the
following curiosity arises.

\begin{theorem}
On $\Zn$, the recursive definition~\eqref{exp} of exponentiation
is valid if and only if $n\in\{1,2,6,42,1806\}$.
\end{theorem}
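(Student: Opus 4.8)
The plan is to reduce the question of validity to a single congruence condition and then solve that condition by elementary number theory. First I would pin down what it means for the definition \eqref{exp} to be \emph{valid} on $\Zn$. For a fixed base $m$, the definition asks that $x\mapsto m^x$ be the unique homomorphism from $(\Zn,1,\scr)$ to $(\Zn,m,x\mapsto x\cdot m)$, exactly as in the description accompanying \eqref{exp}. Now $(\Zn,1,\scr)$ is the single $\scr$-orbit of $1$, that is, the $n$-cycle $1\to 2\to\dots\to n-1\to 0\to 1$. On a cycle such a homomorphism exists (and is then automatically unique) precisely when one full loop is consistent: starting from the value $m$ assigned to $1$ and multiplying by $m$ at each step, after $n$ steps we return to $1$ and must recover the value $m$. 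Since $n$ steps multiply by $m^n$, the requirement is $m^{n+1}=m$ in $\Zn$. Hence \eqref{exp} is valid for every base exactly when
\begin{equation*}
  m^{n+1}\equiv m\pmod n\quad\text{for all }m.
\end{equation*}

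Next I would characterize the $n$ satisfying this congruence. By the Chinese Remainder Theorem it holds modulo $n$ if and only if it holds modulo each prime power dividing $n$. Taking $m=p$ shows $p^2$ cannot divide $n$, since $p^{n+1}\equiv 0$ but $p\not\equiv 0$ modulo $p^2$; thus $n$ must be squarefree. For a prime $p\divides n$, the condition modulo $p$ reads $m^{n+1}\equiv m$, that is $m^n\equiv 1$ for every $m$ not divisible by $p$; because the group of units modulo $p$ is cyclic of order $p-1$, this is equivalent to $(p-1)\divides n$. So the congruence holds if and only if $n$ is squarefree and $(p-1)\divides n$ for every prime $p\divides n$.

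Finally I would enumerate such $n$. Writing the prime factors as $q_1<\dots<q_r$, I would argue by induction that each $q_i$ is forced. The point is that $q_i-1$ has all of its prime factors below $q_i$, hence among $q_1,\dots,q_{i-1}$, and by squarefreeness $q_i-1$ divides the product $q_1\cdots q_{i-1}$. This gives $q_1=2$; then $q_2-1\divides 2$ forces $q_2=3$; then $q_3-1\divides 6$ forces $q_3=7$; then $q_4-1\divides 42$ forces $q_4=43$. The step I expect to be the crux is ruling out a fifth prime: there $q_5-1$ would have to divide $1806=2\cdot 3\cdot 7\cdot 43$, and a finite check of the divisors $d$ of $1806$ shows that $d+1$ is never a prime exceeding $43$; in particular the natural candidate $1806+1=1807=13\cdot 139$ is composite. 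Hence $r\le 4$, the prime factors of $n$ form an initial segment of $2,3,7,43$, and $n\in\{1,2,6,42,1806\}$. Conversely, each of these is squarefree and satisfies $(p-1)\divides n$ for all its prime divisors, so the chain of equivalences closes and the theorem follows.
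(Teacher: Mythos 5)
Your proposal is correct and follows essentially the same route as the paper: reduce validity of~\eqref{exp} to the congruence $m^{n+1}\equiv m\pmod n$ for all $m$, characterize the solutions as the squarefree $n$ with $p-1\divides n$ for every prime factor $p$ of $n$, and then force the prime factors $2$, $3$, $7$, $43$ in turn by a finite check. The only notable differences are that you actually justify the initial reduction (via consistency of one full loop around the $n$-cycle), which the paper merely asserts, and that the paper packages the final enumeration as a recursive computation of the set of `good' primes rather than your induction on the ordered prime factors---but the underlying verifications coincide.
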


\begin{proof}
  The desired $n$ are just such that
  \begin{equation*}
    x^{n+1}\equiv x\pmod n.
  \end{equation*}
Such $n$ are found by Zagier~\cite{Zagier}, and earlier by Dyer-Bennet
\cite{MR0001234}, as I learned by entering the sequence $1$, $2$, $6$, $42$,
$1806$ at~\cite{OEIS}. 
Alternatively,
such $n$ must be squarefree, since if $p$ is prime and $p^2\divides n$, then
$(n/p)^2\equiv0\pmod n$, although $n/p\not\equiv0\pmod n$.  So we want to
find the squarefree~$n$ whose
prime factors $p$ are such that $x^{n+1}\equiv x\pmod p$, that is,
either $p\divides x$ or $x^n\equiv1\pmod p$.  Since $x$ here might be a
primitive root of $p$, we just want $n$ such that
$p-1\divides n$ whenever $p$ is a prime factor of $n$.
Let us refer to a prime $p$ as \textbf{good} if
$p-1$ is squarefree and all prime factors of $p-1$ are good.  
Then all prime factors of $n$ must be good.
This definition of good primes can be understood as being \textbf{recursive.}  Indeed, the empty
set consists of good 
primes.  If $A$ is a finite set of good primes,
then every prime of the form
\begin{equation*}
 1+ \prod_{p\in B}p,
\end{equation*}
where $B\included A$, is good.  Then let $A'$ comprise these primes,
along with the primes in $A$.  We have a recursively defined
function $x\mapsto A_x$ on~$\N$, where $A_1=\emptyset$ and
$A_{x+1}=A_x{}'$.  The set of good primes is the union of the sets $A_x$.
We compute
\begin{align*}
A_2&=\{2\},
&A_3&=\{2,3\},
&A_4&=\{2,3,7\},
&A_5&=\{2,3,7,43\},
\end{align*}
but then the sequence stops growing, since
\begin{align*}
2\cdot 43+1&=87=3\cdot29;&2\cdot 7\cdot 43+1&=603=3^2\cdot 67;\\
  2\cdot 3\cdot 43+1&=259=7\cdot 37;&
2\cdot 3\cdot 7\cdot 43+1&=1807=13\cdot 139.
\end{align*}
So the set of good primes is $\{2,3,7,43\}$.  In this set, we have
\begin{equation*}
  p<q\iff p\divides q-1.
\end{equation*}
Hence the set of desired $n$ is $\{1,\;2,\;2\cdot 3,\;2\cdot 3\cdot
7,\;2\cdot 3\cdot 7\cdot 43\}$, which is as claimed.
\end{proof}

The sequence $(2,3,7,43)$ of good primes is the beginning of
the sequence $(a_1,a_2,\dots)$, where $a_{n+1}=a_1\cdot a_2\dotsm a_n+1$.
This sequence arises from what 
Mazur~\cite{Mazur-Th} calls the \emph{self-proving} formulation of
Proposition IX.20 of Euclid's \emph{Elements.}  Euclid's formulation,
in Heath's translation, is, `Prime numbers are more than any assigned
multitude of prime numbers.'\footnote{\Gk{O<i pr~wtoi >arijmoi
    ple'iouc e>is`i pant`oc to~u protej'entoc pl'hjouc pr'wtwn
    >arijm~wn} \cite{Perseus}.
Euclid says not that there are
  \emph{infinitely many} prime numbers, but that there are more than
  can be told.}  In these terms, the self-proving formulation is that,
if a multitude of primes be assigned, then the product of its members,
plus one, is a number whose prime factors---of which there is at least
one---are not in the assigned multitude. 

Again, as an iterative structure, $\N$ admits recursive definition of
functions because of the theorem below. 
The theorem should be standard.  
Dedekind \cite[II (126), p.~85]{MR0159773} proves the reverse
implication, as does, more recently, 
Stoll \cite[ch.~2, Thm 1.2, p.~61]{MR83e:04002}.  Mac Lane and
  Birkhoff \cite{MR0214415} mention the theorem, apparently
  attributing the forward direction to Lawvere: they refer to
  admission of recursion by an 
  iterative structure as the Peano--Lawvere Axiom.
  (Lawvere and Rosebrugh \cite{MR1965482} call it the Dedekind--Peano
  Axiom.) 

\begin{theorem}[Recursion]\label{thm:rec}
For an iterative structure to admit recursion, it is sufficient and
necessary that 
\begin{clauseenum}
\item
it admit induction,
  \item
$1$ be not a successor, and
\item
every successor have a unique immediate predecessor.\qedhere
\end{clauseenum}
\end{theorem}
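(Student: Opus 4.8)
The plan is to prove the two implications separately, treating \emph{sufficiency}---that the three conditions force $\str{A}$ to admit recursion---as the substantive part, and deducing \emph{necessity} from the isomorphism $\str{A}\cong\N$ already set up in the discussion preceding the theorem. For sufficiency, suppose $\str{A}$ admits induction, that $1^{\str{A}}$ is not a successor, and that $\scr^{\str{A}}$ is injective. Given an arbitrary iterative structure $\str{B}$, I would reconstruct $h$ as in the earlier heuristic treatment: let $R\included A\times B$ be the smallest subset containing $(1^{\str{A}},1^{\str{B}})$ and closed under the operation $S\colon(x,y)\mapsto(\scr^{\str{A}}(x),\scr^{\str{B}}(y))$, that is, the smallest substructure of the product $\str{A}\times\str{B}$. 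Totality of $R$---that every $x$ in $A$ has some partner $y$ with $(x,y)\in R$---follows at once by induction on $\str{A}$, since the first-coordinate projection of $R$ contains $1^{\str{A}}$ and is closed under $\scr^{\str{A}}$.

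The crux is to show that $R$ is single-valued, again by induction on $\str{A}$; this is exactly where the two structural hypotheses are consumed, and where, as the text warns, induction by itself is not enough. For the base case, suppose $(1^{\str{A}},y)\in R$ with $y\neq1^{\str{B}}$; then $R\setminus\{(1^{\str{A}},y)\}$ still contains $(1^{\str{A}},1^{\str{B}})$ and is still closed under $S$, because the first coordinate of any value $S(a,b)$ is the successor $\scr^{\str{A}}(a)$, which cannot equal $1^{\str{A}}$; this contradicts the minimality of $R$, so $1^{\str{A}}$ is paired only with $1^{\str{B}}$. For the step, assume $x$ has a unique partner $y$ and suppose $(\scr^{\str{A}}(x),z)\in R$ with $z\neq\scr^{\str{B}}(y)$; then $R\setminus\{(\scr^{\str{A}}(x),z)\}$ contains $(1^{\str{A}},1^{\str{B}})$, since $\scr^{\str{A}}(x)\neq1^{\str{A}}$, and is closed under $S$, for the only way to have $S(a,b)=(\scr^{\str{A}}(x),z)$ would be to have $\scr^{\str{A}}(a)=\scr^{\str{A}}(x)$, whence $a=x$ by injectivity of $\scr^{\str{A}}$, forcing the partner $b$ to be $y$ and hence $z=\scr^{\str{B}}(y)$, against assumption. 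Minimality is contradicted once more, so the induction closes and $R$ is the graph of a function $h\colon A\to B$.

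It remains, for sufficiency, to note that $h$ is a homomorphism---immediate from $(1^{\str{A}},1^{\str{B}})\in R$ and closure under $S$---and that it is the only one, since the graph of any homomorphism $\str{A}\to\str{B}$ contains $(1^{\str{A}},1^{\str{B}})$ and is closed under $S$, hence includes the minimal $R$ and, being a total function like $h$, equals it. For necessity, suppose $\str{A}$ admits recursion. The argument given just before the theorem---compose the unique homomorphisms $\str{A}\to\N$ and $\N\to\str{A}$, and compare the two composites with the respective identities, invoking uniqueness in $\str{A}$ and in $\N$---exhibits $\str{A}$ and $\N$ as isomorphic. Since $\N$ admits induction, has $1$ not a successor, and has injective succession, and since each of these is preserved under isomorphism, all three conditions hold in $\str{A}$.

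I expect the single-valuedness induction to be the main obstacle. It is the one step that genuinely uses the two structural hypotheses---the base case exploiting that $1^{\str{A}}$ is not a successor, the inductive step exploiting injectivity of $\scr^{\str{A}}$---and it is easy to fool oneself into a circular induction that silently assumes what is to be proved. The device that makes each hypothesis bite is the same in both cases: delete the offending pair and check that what remains still contains $(1^{\str{A}},1^{\str{B}})$ and is closed under $S$, so that the minimality of $R$ is violated.
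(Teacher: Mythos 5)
Your proposal is correct and follows essentially the same route as the paper: sufficiency via the smallest relation containing $(1^{\str A},1^{\str B})$ and closed under $(x,y)\mapsto(x^{\scr},y^{\scr})$, with totality by induction and single-valuedness extracted from the two structural hypotheses, and necessity via the composition of the two unique homomorphisms exhibiting an isomorphism with $\N$. Your ``delete the offending pair'' device is just the explicit working-out of the minimality argument the paper leaves implicit, so there is nothing to object to here.
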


An iterative structure meeting the three conditions listed in the
theorem is what Dedekind~\cite[II
  (71), p.~67]{MR0159773} calls a \textbf{simply infinite system;} it is what
is axiomatized by the so-called Peano Axioms~\cite{Peano}.  Peano
seems to assume that the Recursion Theorem is 
obvious, or at least that recursive definitions are obviously justified by
induction alone.  They are not, as Landau points out
\cite[pp.~ix--x]{MR12:397m}; but the confusion continues to be
made.\footnote{Indeed, Mac Lane and Birkhoff \cite{MR0214415} seem to
  invite the student to make the confusion.  They give the Peano
  Axioms and then immediately use \emph{recursion} to define the
  iterates of a permutation of a set.  Only later is the equivalence
  of recursion to the axioms mentioned.
Another writer who invites confusion is Burris \cite[Appendix B,
    p.~391]{Burris}: he states the Peano Axioms and then
  immediately defines addition recursively without justification,
  although he claims to be following the presentation of
  Dedekind.} 

To establish the \emph{sufficiency} of the conditions given in the
theorem, we can use them as suggested at the head of this section.
Namely, we can show that the unique homomorphism, given in~\eqref{h}, from $\N$ to an
arbitrary iterative structure $\str A$ is the intersection $h$ of the
set of all \emph{relations} from $\N$ to $A$ that contain $(1,1)$ and
are closed under $(x,y)\mapsto(x^{\scr},y^{\scr})$.  This requires
assuming that $\N$ itself is indeed a set, as opposed to a proper
class.  
Without this assumption, one can obtain $h$ as the union of the set
comprising every relation $R$ from $\N$ to $A$ such that, if
$x\mathrel Ry$, then either $(x,y)=(1,1)$ or else
$(x,y)=(u^{\scr},v^{\scr})$ for some $(u,v)$ such that $u\mathrel
Rv$. 

Parallel to the two ways of obtaining $h$ as in~\eqref{h}, there are two ways to obtain $\N$ itself; these are considered in the next section.

\section{Numbers, constructively}\label{sect:nc}

The next two sections have aspects of an historical review of
set-theory.  However, I refer only to articles and books that I have
direct access to (mainly, though not exclusively, in van 
Heijenoort's anthology \cite{MR1890980}); therefore my review is
incomplete, as the references in Fraenkel \emph{et
  al.}~\cite{MR0345816} and in Levy \cite{MR1924429} make me aware.  

In a letter to a skeptic called Keferstein, Dedekind
\cite{Dedekind-letter} explains his work as an answer to the question:
\begin{myquote}
  What are the mutually independent fundamental properties of the
  sequence~$\N$, that is, those properties that are not derivable from
  one another, but from which all others follow?\footnote{I take the
  liberty of writing~$\N$ where van Heijenoort's text has simply $N$.}
\end{myquote}
Having discovered these properties and made them the defining
properties of a simply infinite system, Dedekind feels the need to
ask,
\begin{myquote}
  does such a system \emph{exist} at all in the realm of our ideas?
  Without a logical proof of existence it would always remain doubtful
  whether the notion of such a system might not perhaps contain
  internal contradictions.
\end{myquote}
Dedekind finds his proof in `the totality $S$ of all things, which can
be objects of my thought'~\cite[II (66), p.~64]{MR0159773}: this is
closed under the injective but non-surjective operation that converts
a thought into the thought that it \emph{is} a thought.  But I will
say with Bob Dylan,
`You don't need a weatherman to know which way the wind blows.'  The
consistency of Dedekind's definition of a simply infinite system is
self-evident.  If we do construct an example, this will tend to
confirm, if anything, the validity of our method of construction,
rather than the consistency of the definition itself.

The existence of a simply infinite system with an underlying
\emph{set} is the Axiom of Infinity of set-theory.  I take set-theory
to begin with the Extension Axiom mentioned in \S\ref{intro}, along
with the \textbf{`Comprehension Axiom':} every property determines a
set, namely the set of objects with the property.  Russell shows this
`axiom' to be false in his letter to Frege:
\begin{myquote}
  Let $w$ be the predicate: to be a predicate that cannot be
  predicated of itself.  Can $w$ be predicated of itself?  From each
  answer its opposite follows.  Therefore we must conclude that $w$ is
  not a predicate.  Likewise there is no class (as a totality) of
  those classes which, each taken as a totality, do not belong to
  themselves.  From this I conclude that under certain conditions a
  definable collection does not form a totality.\hfill
  \cite{Russell-letter} 
\end{myquote}
It is common now to refer to Russell's
 totalities\footnote{Actually the term `totality' in the quotation is a
 translation from Russell's German.} as
\textbf{sets,} and to his definable collections as \textbf{classes.}  Then
  all sets are classes.  The `Comprehension Axiom' is the converse;
  but it is false.  If alternative axioms for set-theory can be
  proposed, and these allow construction of a satisfactory example of
  a simply infinite system, then the construction will not really justify the Peano Axioms; rather, it will tend to justify the set-theory axioms.

Zermelo \cite{Zermelo-invest} offers seven axioms\footnote{Like
  Zermelo, I shall not bother to distinguish axioms from axiom
  \emph{schemes.}} 
 for sets; they can be
expressed as follows:
\begin{sentenceenum}
\item
\textbf{Extension} (as in \S\ref{intro}).
\item
\textbf{Elementary Sets:} classes $\emptyset$, $\{x\}$, and $\{x,y\}$,
with no, one, and two elements, are sets.
\item
\textbf{Separation:}  subclasses of sets are sets.
\item
\textbf{Power Set:}  the class $\pow x$ of subsets of a set $x$ is a set.
\item
\textbf{Union:}  the union $\bigcup x$ of a set $x$ is a set.
\item
\textbf{Choice:}  the union of a set of nonempty disjoint sets has a
subset that has exactly one element in common with each of those
disjoint sets.
\item
\textbf{Infinity:} there is a set that contains $\emptyset$ and closed
under the operation $x\mapsto\{x\}$ of singleton formation.
\end{sentenceenum}
By the Axioms of Union and Elementary Sets, two sets $x$ and $y$ have
a \textsl{union,} $x\cup y$, namely $\bigcup\{x,y\}$.  By the Power Set
Axiom, for every set $x$, there is a set, namely $\pow x$, which has
greater cardinality by Cantor's Theorem.\footnote{The theorem is so
  called by Zermelo \cite[p.~211]{Zermelo-invest}.}  
In particular, the sequence
\begin{align}\label{eqn:pow}
  &x,&& \pow x,& &\pow{\pow x},& &\pow{\pow{\pow x}},& &\dots
\end{align}
is of strictly increasing cardinality, although we have not formulated
a precise notion of sequence.

By the Separation Axiom, the \textsl{intersection} $\bigcap\class C$ of a
nonempty class $\class C$ is a set.  Hence, by the Axiom of Infinity, the
intersection of the class of sets that contain $\emptyset$ and are
closed under $x\mapsto\{x\}$ is a set $\Omega$.  Therefore
$\bigl(\Omega,\emptyset,x\mapsto\{x\}\bigr)$ is a simply infinite system.  In
particular, it admits induction, and we may express this by writing
\begin{equation}\label{Omega}
  \Omega
  =\Bigl\{\emptyset,\{\emptyset\},\bigl\{\{\emptyset\}\bigr\},\dots\Bigr\}. 
\end{equation}
It would however be more satisfactory to obtain $\Omega$, as a class,
\emph{without} assuming the Axiom of Infinity.  To this end,
let us denote by
\begin{equation*}
  \zclass
\end{equation*}
the class comprising every set
whose every element is either $\emptyset$ or $\{x\}$ for some $x$
in the set.
Then $\zclass$ contains the sets
$\emptyset$, $\{\emptyset\}$, $\bigl\{\emptyset,\{\emptyset\}\bigr\}$,
$\Bigl\{\emptyset,\{\emptyset\},\bigl\{\{\emptyset\}\bigr\}\Bigr\}$,
and so forth.  The hope is that $\bigcup\zclass$ is the desired class
$\Omega$. 

To tell whether the hope is realized, we can note first that induction
makes sense for proper classes.  In particular, suppose
$\bigcup\zclass$ has a subclass $\class D$ that contains $\emptyset$
and is closed 
under $x\mapsto\{x\}$.  Then on the class
$\bigcup\zclass\setminus\class D$, the function $\{x\}\mapsto x$ of unique-element extraction is
well-defined, 
and the class is closed under this function.  If we could conclude
that $\bigcup\zclass\setminus\class D$ must be empty---which we
cannot yet do---then
$\bigl(\bigcup\zclass,\emptyset,x\mapsto\{x\}\bigr)$ would
admit induction.  

Skolem \cite[\S4, pp.~296--7]{Skolem-some-remarks} observes that
Zermelo's axioms do
not guarantee the existence of a set comprising the terms of the
sequence in~\eqref{eqn:pow}.  He remedies this with the following
axiom (also discovered by Fraenkel):
\begin{sentenceenum}\setcounter{enumi}{7}
\item
\textbf{Replacement:} the image of a set under a function is a set.
\end{sentenceenum}
Skolem makes the notion of a class precise: it is defined by a
formula in (what we call) the first-order logic of the signature
$\{\in\}$.  These formulas can be defined \textbf{recursively:}  
\begin{sentenceenum}
  \item
Equations $x=y$ and `memberships' $x\in y$ are formulas.
\item
if $x$ is a variable, and $\phi$ and $\psi$ are formulas, then
$\lnot\phi$, 
$(\phi\lto\psi)$, and $\Exists x\phi$ are formulas.
\end{sentenceenum}
More precisely, a formula in \emph{one} free variable defines a class,
while a formula in \emph{two} free variables defines a binary
relation, which may be a function.  A binary relation becomes identified
with a class when, with Kuratowski
\cite[p.171]{Kuratowski},\footnote{An earlier definition of ordered
  pairs is Wiener's \cite{Wiener}:
  $(x,y)=\Bigl\{\bigl\{\{x\},\emptyset\bigr\},\bigl\{\{y\}\bigr\}\Bigr\}$,
  though 
  Wiener calls it
  $\iota`(\iota`\iota`x\cup\iota`\Lambda)\cup\iota`\iota`\iota`y$,
using the notation of Whitehead and Russell \cite[pp.~28--37]{PM}.}  
we define
ordered pairs by the identity
\begin{equation}\label{eqn:op}
  (x,y)=\bigl\{\{x\},\{x,y\}\bigr\}. 
\end{equation}
If we are willing to accept a recursive definition like the definition of formulas in the signature $\{\in\}$, then we should accept a 
definition of natural numbers as strings of the form $\scr\dotsb\scr1$
(or perhaps instead of the form $1\dotsb1$).  However, such definitions do not give us natural
numbers or formulas as composing
\emph{sets.}  Nor is this required for the sake of
formulating set-theory in the first place.

A class on which the function $\{x\}\mapsto x$ is a well-defined
operation is one example of a class whose every element has nonempty
intersection with the class.  Skolem~\cite[\S 6,
  p.~298]{Skolem-some-remarks} perceives the possibility, along with 
the non-necessity, of such
classes: this suggests non-categoricity of the axioms so far.
Von Neumann \cite[\S5, pp.413--4]{von-Neumann-ax} more explicitly
proposes excluding such classes with a new axiom.  His axiom can be
understood as that there is no sequence $(a,a',a'',\dots)$ such
that $a'\in a$, and $a''\in a'$, and so forth.
Since we are now engaged in determining how such a sequence
can be formulated in the first place, let us
introduce the new axiom in the following form.
\begin{sentenceenum}\setcounter{enumi}{8}
\item
\textbf{Foundation:} membership is well-founded on every nonempty
class, that is, some element is disjoint from the
class.\footnote{The Foundation Axiom has a \emph{set} form,
  namely that that membership is well-founded on every nonempty
  \emph{set.}  See Levy~\cite[II.7, p.~72]{MR1924429} on the
  equivalence of the two forms in the presence of the other axioms.
  However, in the absence of the Axiom of Infinity, 
  the set form of Foundation does not imply the class form given
  above.  A proof that this is so can be adapted from a proof given by Cohen~\cite[II.5,
    p.~72]{MR0232676} to show the independence of the set form of
  Foundation from the other axioms.
  Suppose we do have $\vnn$ in the usual sense, as at~\eqref{omega}
  below.  Let $C$ be a set $\{a_x\colon x\in\vnn\}$ indexed by $\vnn$ such that no member is an element of another member.  For example, each $a_n$ could be $\{n+1\}$.  
  If
  $a$ is an arbitrary set, let $\powf x$ denote the set of \emph{finite} subsets
  of $a$.  Now let
  $C^*=\bigcup\{C,\powf C,\powf{\powf C},\dots\}$.
  Define $<$ on
  $C^*$ so that $a<b$ if and only if either 
  $a\in b$, or else $a=a_{k+1}$ and
  $b=a_k$ for some $k$ in $\vnn$.  Then $(C^*,<)$ is a model
  of the first eight numbered axioms above, except Infinity; and it is
  a model of the set form of Foundation.  But it is not a model of the
  class form of Foundation.  Indeed, in $(C^*,<)$, we can define the
  class $\class D$ of all sets $A$ such that, if $b\in A$, then
  $b\included A$.  Then $\class
  D=\bigcup\{\emptyset,\pow{\emptyset},\pow{\pow{\emptyset}},\dots\}$,
  and $C^*\setminus\class D$ is a counterexample to Foundation.} 
\end{sentenceenum}
With the new axiom then, the definition of $\Omega$ in~\eqref{Omega}
as the union
$\bigcup\zclass$ works, in that $\bigcup\zclass$ admits induction
by the argument above.  However, it is not very satisfying to have to
rely on the new axiom.  We can avoid using the axiom as such by
incorporating it into the definition of $\zclass$.  

\begin{theorem}\label{thm:Zm'}
Let $\zclass'$ comprise those elements of $\zclass$ whose every
subset $x$ is disjoint from some element of $x$.  Then
$(\bigcup\zclass',\emptyset,x\mapsto\{x\})$ is a simply infinite system,
even without recourse to the Axioms of Infinity and Foundation.
\end{theorem}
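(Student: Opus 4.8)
The plan is to check the three conditions defining a simply infinite system in Theorem~\ref{thm:rec}---admission of induction, that $\emptyset$ is not a successor, and that every successor has a unique immediate predecessor---for $(\bigcup\zclass',\emptyset,x\mapsto\{x\})$, invoking only Extension, Elementary Sets, Separation, and Union. Two of the conditions are immediate. No successor equals $\emptyset$, since $\{x\}$ has the element $x$; and $\{x\}=\{y\}$ gives $x=y$ by Extension, so successors have unique predecessors. Furthermore, if a successor $\{x\}$ lies in $\bigcup\zclass'$, say $\{x\}\in a$ with $a\in\zclass'$, then $\{x\}$ is a nonempty element of $a\in\zclass$, forcing $x\in a\subseteq\bigcup\zclass'$, so the predecessor remains in the structure. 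What needs real work is that $\bigcup\zclass'$ contains $\emptyset$, is closed under $x\mapsto\{x\}$, and admits induction.

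For the base point, $\{\emptyset\}$ belongs to $\zclass'$, so $\emptyset\in\bigcup\zclass'$. For closure, let $y\in\bigcup\zclass'$, say $y\in a$ with $a\in\zclass'$; if $\{y\}\in a$ we are done, so suppose $\{y\}\notin a$ and set $b=a\cup\{\{y\}\}$. That $b\in\zclass$ is routine, and it remains to verify the disjointness clause of $\zclass'$ for $b$. Given a nonempty $x\subseteq b$, I split on whether $x$ meets $a$. If it does not, then $x=\{\{y\}\}$, and its element $\{y\}$ is disjoint from $x$ exactly when $y\neq\{y\}$. If $x$ does meet $a$, I apply the clause for $a$ to the nonempty subset $x\cap a$, obtaining $z\in x\cap a$ with $z\cap(x\cap a)=\emptyset$; since $z$ is $\emptyset$ or a singleton $\{w\}$ with $w\in a$, and $\{y\}\notin a$, we get $\{y\}\notin z$, so in fact $z\cap x=\emptyset$. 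Both cases hinge on the fact that $y\neq\{y\}$ for every $y\in\bigcup\zclass'$: were $y=\{y\}$, then $\{y\}$ would be a nonempty subset of a witnessing $a\in\zclass'$ none of whose elements is disjoint from it, against the clause. This is precisely where the condition built into $\zclass'$ substitutes for the Foundation Axiom.

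The crux is induction, the step that the earlier discussion could not complete for $\bigcup\zclass$. Suppose $\class D\subseteq\bigcup\zclass'$ contains $\emptyset$ and is closed under $x\mapsto\{x\}$, and suppose for contradiction that $y_0\in\bigcup\zclass'\setminus\class D$; choose $a\in\zclass'$ with $y_0\in a$. By Separation, $x=a\setminus\class D$ is a set, and it is nonempty. The defining clause of $\zclass'$ yields $z\in x$ with $z\cap x=\emptyset$. Now $z\in a$ and $z\notin\class D$ while $\emptyset\in\class D$, so $z\neq\emptyset$; hence $z=\{w\}$ with $w\in a$. Since $w\in z$ and $z\cap x=\emptyset$, we have $w\notin x$, so $w\in\class D$, whence $z=\{w\}\in\class D$---a contradiction. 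Therefore $\class D=\bigcup\zclass'$. The main obstacle is conceptual rather than computational: one must recognize that the clause of $\zclass'$ hands us exactly the $\in$-minimal witness $z$ that Foundation would otherwise provide, after which the singleton structure forces the contradiction. Since $\bigcup\zclass'$ is handled throughout as a possibly proper class, the Axiom of Infinity is never used.
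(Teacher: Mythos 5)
Your proof is correct, and its core step---using the defining clause of $\zclass'$ to produce a minimal element $z$ of $a\setminus\class D$, which must be a singleton $\{w\}$ with $w\in\class D$ and hence lie in $\class D$ itself---is exactly the paper's own induction argument. Your additional verifications (that $\emptyset\in\bigcup\zclass'$, that $\bigcup\zclass'$ is closed under $y\mapsto\{y\}$ because $a\cup\bigl\{\{y\}\bigr\}$ stays in $\zclass'$, and the two successor conditions) are sound and supply details the paper leaves implicit.
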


\begin{proof}
If
$A\in\zclass'$, and $\class D$ is a subclass of $\bigcup\zclass'$ that
contains $\emptyset$ and is closed under $x\mapsto\{x\}$, then on 
$A\cap(\bigcup\zclass'\setminus\class D)$, the operation $\{x\}\mapsto x$ is
well-defined, so $A\cap(\bigcup\zclass'\setminus\class D)$ must be empty.  Since
$A$ was arbitrary, $\bigcup\zclass'\setminus\class D$ is empty.
\end{proof}

It may appear that the definition of $\Omega$ as $\bigcup\zclass'$ still does
not yield what is desired.
Foundation excludes sets
on which the operation $\{x\}\mapsto x$ is well-defined; but suppose
$A$ is a nonempty `collection' on which $\{x\}\mapsto x$ is
well-defined.  Then
the union $A\cup\Omega$ may belong to $\zclass'$, unless $A$ can be
obtained as a subclass of this union.  If we can obtain $\Omega$ by
other means, then we can obtain $A$ as
$(A\cup\Omega)\setminus\Omega$; but this just begs the question of whether
we have $\Omega$ as a class in the first place.

If we think we need to fix this problem, we might try letting
$\zclass''$ comprise the \textsl{finite}
elements of $\zclass'$.  Here we can use for example the definition mentioned
by Whitehead and Russell \cite[$*$120.23]{PM-II},\footnote{I learn the
  reference
  from Suppes \cite[p.~102]{MR0349389}; various definitions of
  finite sets are discussed by Tarski \cite{Tarski-finite}.} whereby
$B$ is finite if every
subset of $\pow B$ contains $B$, provided the subset contains $\emptyset$
and is closed under the operation $x\mapsto\{c\}\cup x$ for each $c$
in $B$.  If $\zclass''$ 
had a finite element $A\cup\Omega_0$, where $A$ violated
Foundation, then we could remove the elements
of $\Omega_0$ one by one, thus obtaining $A$, in violation of
Foundation.  However, we still have no guarantee that we can do this,
unless we already have $\Omega_0$ as a class.

The problem here is the one that Skolem hit upon: set-theory is not
categorical. 
Indeed, suppose the
formula $\phi(x)$ defines the class $\Omega$ as in~\eqref{Omega}, and
introduce constants $c$, $c'$, $c''$, \dots  Then the sentences
\begin{align*}
   &\phi(c),  &
 c'&\in c,    &
   &\phi(c'), &
c''&\in c',   &
   &\phi(c''),&
   &\dots
\end{align*}
are consistent with set-theory, in that no contradiction is derivable
from them that is not already derivable from the axioms.  We must
settle for observing that, in any model of set theory, $\phi$ defines
a class that acts \emph{as if} it were as in~\eqref{Omega}.  From
outside the model, we may see an infinite descending chain of elements
of the model; but the chain is not a class of the
model.\footnote{Edward Nelson \emph{recommends} conceiving some
  natural numbers as \textsl{nonstandard} and as composing an infinite
  descending chain.  See Chapter 1, `Internal set theory,' of a
  once-proposed book
  (\url{http://www.math.princeton.edu/~nelson/books.html}, accessed
  March 30, 2009).}

\section{Ordinals}\label{sect:o}

As sets may be gathered into classes, so classes may be gathered
into---let us say \textbf{families.}  For example, given an
equivalence-relation on a class, we have the family of
equivalence-classes of members of that class.  

Given well-ordered sets $\str A$ and $\str B$, let us say $\str A<\str
B$ if $\str A$ is isomorphic to a proper initial segment of $\str B$.
Then exactly one of $x<y$, $x\cong y$, and $y<x$ holds for all
well-ordered sets~$x$ and $y$.  Hence the relation $<$ induces a
well-ordering of the family of isomorphism-classes of well-ordered
sets.

It may be desirable to find a well-ordered class that is isomorphic to the
family of isomorphism-classes of 
  well-ordered sets.  This will be 
  \begin{equation*}
  \on, 
  \end{equation*}
the class of \textbf{ordinal numbers.}
Then every well-ordered set will be isomorphic
  both to a unique ordinal and to the well-ordered set of its
  predecessors in \on.  With this motivation and observation, 
von Neumann~\cite{von-Neumann} obtains \on{} so that each
  element \emph{is} the set of its predecessors.

Suppose $\str A$ is an \textbf{order,} or 
\textbf{ordered set,} $(A,<)$: by
this I mean simply that $<$ is 
irreflexive and transitive on the set $A$.\footnote{So $<$ is what is
  sometimes called a \textsl{strict partial ordering,} although it
  might be total.}  A \textbf{section} of $\str A$ is the set of
predecessors in $A$ of a particular element of $A$.  Let the set of
predecessors of $x$ be denoted by $\pred x$, so that
\begin{equation*}
  \pred x=\{y\in A\colon y<x\}.
\end{equation*}
Then $\str A$
\textbf{admits induction} if the only subset $B$ of $A$ for which
\begin{equation*}
  \pred x\included B\implies x\in B
\end{equation*}
for all $x$ in $A$ is $A$ itself.  There are various ways to define recursion
in ordered sets; it is simplest for present purposes to say that
$\str A$ \textbf{admits recursion} if, for every class $\class C$,
for every function $\cF$ from $\pow{\class C}$ to $\class C$, there is a
unique function $g$ from $A$ to $\class C$ given by
\begin{equation}\label{g}
g(x)=\cF(\{g(y)\colon y<x\})=\cF(g[\pred x]).  
\end{equation}
As with Theorem~\ref{thm:rec}, the following should be standard.  Indeed, possibly awareness of the following has led to the misconception that recursion and induction are equivalent properties of iterative structures.

\begin{theorem}[Recursion]
For \emph{total} orders, admission of
recursion and induction are equivalent to each other and to being
well-ordered. \hfill\qedsymbol
\end{theorem}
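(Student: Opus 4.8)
The plan is to prove the three-way equivalence through a short cycle, treating the purely order-theoretic equivalence first and then the two directions that involve recursion. Write (I) for admitting induction, (W) for being well-ordered, and (R) for admitting recursion.

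First I would dispose of (I) $\iff$ (W), since for a \emph{total} order both are reformulations of the statement that every non-empty subset has a minimal element. By trichotomy a minimal element of a subset is automatically least, so (W) is exactly the property that every non-empty subset has a minimal element. For the link with induction, observe that a subset $B\included A$ witnesses failure of (I) precisely when $B\neq A$ yet $\pred x\included B\implies x\in B$ for all $x$. Given such a $B$, the complement $T=A\setminus B$ is non-empty and, by contraposition of the closure condition, every $x\in T$ has a predecessor in $T$, so $T$ has no minimal element; conversely, if $T$ is non-empty with no minimal element then $B=A\setminus T$ is a proper inductive subset. Hence (I) fails if and only if some non-empty subset lacks a minimal element, which is exactly the negation of (W).

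Next I would show (W) $\implies$ (R). Fix a class $\class C$ and a function $\cF\colon\pow{\class C}\to\class C$. Uniqueness is immediate: if $g_1$ and $g_2$ both satisfy $g(x)=\cF(g[\pred x])$, then the set on which they agree satisfies the closure condition of (I), hence equals $A$. For existence I would use \emph{attempts}: call a function $t$ an attempt if its domain is an initial segment of $A$ (a downward-closed subset) and $t(y)=\cF(t[\pred y])$ throughout its domain. A coherence lemma, proved by induction (or by taking the least point of disagreement), shows that any two attempts agree where both are defined, so the union $g$ of all attempts is a single function. A second induction shows that its domain is all of $A$: if $\pred x$ lies in the domain of $g$, then the restriction of $g$ to $\pred x$ is an attempt on the initial segment $\pred x$, and adjoining the value $\cF(g[\pred x])$ at $x$ yields an attempt defined at $x$. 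The resulting $g$ satisfies the recursion. Here Replacement guarantees that each image $g[\pred x]$, and hence $g$ itself, is a set, so that $\cF$ is only ever applied to genuine members of $\pow{\class C}$.

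Finally I would prove (R) $\implies$ (W) by feeding the recursion a rank function into the ordinals. Take $\class C=\on$ and let $\cF(T)=\sup\{\beta+1\colon\beta\in T\}$ for a set $T$ of ordinals (and $0$ otherwise). Recursion then supplies $g\colon A\to\on$ with $g(x)=\sup\{g(y)+1\colon y<x\}$, so that $y<x\implies g(y)<g(x)$. For a non-empty $S\included A$, the image $g[S]$ is a non-empty set of ordinals, which has a least element $g(s)$; by strict monotonicity of $g$ together with trichotomy, $s$ is the least element of $S$. Thus (R) forces (W), completing the cycle. The order-theoretic bookkeeping in the first and third steps is routine; the real work, and the main obstacle, is the existence half of the recursion theorem in the second step, where one must assemble the attempts into a single total function while tracking initial segments carefully (these, unlike in the successor case, may contain their own maxima) and respecting the set/class distinction, since both $\class C$ and the family of attempts may be proper classes.
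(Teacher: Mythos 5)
The paper offers no proof of this theorem at all---it is stated with the end-of-proof mark and explicitly flagged as something that ``should be standard''---and your argument is a correct and complete rendition of exactly that standard proof: the equivalence of induction with well-ordering via complements and minimal elements (with totality used only to upgrade ``minimal'' to ``least''), the existence half of recursion via coherent attempts on downward-closed domains, and the converse via a strictly monotone rank function into $\on$. The points you single out as needing care (Replacement to keep $g[\pred x]$ and $g$ itself sets, the possible properness of the class of attempts) are indeed the only places where the argument is more than bookkeeping, so nothing is missing.
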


If $\str A$ is well-ordered, and
in~\eqref{g} we let $\cF$ be the identity on the universe $\universe$, 
then
\begin{equation}\label{gg}
g(x)=\{g(y)\colon y<x\}=g[\pred x].
\end{equation}
In this case, $(g[A],\in)$ is the ordinal number isomorphic to $\str A$;
indeed, this is so by von Neumann's original \emph{definition} of the
ordinal numbers \cite[p.~348]{von-Neumann}.  Alternative, equivalent
definitions 
include R.~Robinson's \cite{RobinsonR}, namely that an ordinal is a
set that both is \emph{totally} ordered by membership and
\emph{includes} all of its elements.  This 
definition of ordinals assumes the Foundation Axiom, which ensures that each
ordinal is \emph{well}-ordered by membership.  A set that includes all
of its elements is a \textbf{transitive} set; then another definition
of ordinals, again assuming Foundation, is that they are transitive
sets of transitive sets.

The class \on{} has the injective operation $x\mapsto x\cup\{x\}$ of
succession as well as the element~$\emptyset$, which is not a
successor.  Thus \on{} meets two of the three conditions for being a
simply infinite system.  It fails to meet the third condition, of
admitting induction, if there are ordinals other than $\emptyset$ that
are not successors.  Such ordinals are \textbf{limits.}  The
class of ordinals that neither \emph{are} limits nor \emph{contain}
limits is called
\begin{equation}\label{omega}
  \vnn.
\end{equation}
This is an initial segment of \on, so it is either $\on$ itself or a
member of it.  In any case, $(\vnn,\emptyset,x\mapsto
x\cup\{x\})$ is a simply infinite system.  We could let the Axiom of
Infinity be that $\vnn$ is a set.  

Fraenkel \emph{et al.}~\cite[ch.~II, \S3, 6, p.~49]{MR0345816} are vague
on whether this Axiom can be expressed in the form `Such-and-such a
class is a set.'  They list three forms of the
Axiom, called VIa, b, and c, namely: there are sets containing
$\emptyset$ and closed under 
$x\mapsto\{x\}$, $x\mapsto x\cup\{x\}$, and $(x,y)\mapsto x\cup\{y\}$
respectively.  They write:
\begin{myquote}
  In our heuristic classification of the axioms, as to whether they
  are instances of the axiom schema of comprehension or not, the axiom
  of infinity can, to some extent, be viewed as such an instance.
  Each of VIa and VIb can be stated as `there exists a set which
  consists of all natural numbers', for the respective notion of
  natural number, and VIc can be stated similarly.
\end{myquote}
The authors seem to be treating `being a natural number' as a
condition defining a class in the original imprecise sense.  However,
the authors \emph{do} have the precise notion of a condition as being
given by a formula in the first-order logic of $\in$.  I do not
know why they do
not directly address the question of whether, without the Axiom of
Infinity, there is still a formula defining the natural numbers.

The Recursion Theorem for iterative structures (Theorem~\ref{thm:rec}
above)
can be extended to include the following, which gives conditions under
which the equivalence of
properties~\ref{item:ind} and~\ref{item:wo} in \S\ref{no-geo} can be
established. 

\begin{theorem}\label{thm:order}
An iterative structure is a simply infinite system if and only if it can be ordered so that $x<x^{\scr}$ and one of the following holds:
\begin{clauseenum}\renewcommand{\theenumi}{\alph{enumi}}
\item\label{item:wox}
it is well-ordered, and every element besides
$1$ is $x^{\scr}$ for some element $x$, or
\item\label{item:ox}
it admits induction.
\end{clauseenum}
\end{theorem}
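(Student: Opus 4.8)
The plan is to route everything through the unique homomorphism $h\colon\N\to\str A$, which exists for every iterative structure $\str A$ because $\N$ admits recursion, and which is given by $h(1)=1^{\str A}$ and $h(x^{\scr})=h(x)^{\scr}$. The key reduction is that `is a simply infinite system' and `is isomorphic to $\N$' coincide: a simply infinite system admits recursion by Theorem~\ref{thm:rec} and is therefore isomorphic to $\N$ by the remarks preceding that theorem, while conversely the three defining conditions transport across any isomorphism. So it suffices to show that each side of the claimed equivalence forces $\str A\cong\N$. For the forward direction, given a simply infinite system $\str A$, I would fix an isomorphism $\phi\colon\N\to\str A$ and transport the standard well-ordering, declaring $x<y$ when $\phi^{-1}(x)<\phi^{-1}(y)$. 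Since $\phi^{-1}$ is itself a homomorphism we get $x<x^{\scr}$, the order is a well-ordering, and every element other than $1^{\str A}$ is a successor because this holds in $\N$; thus clause~\ref{item:wox} holds.

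For the converse under clause~\ref{item:ox}, suppose $\str A$ carries an order with $x<x^{\scr}$ and admits induction. The image $h[\N]$ contains $1^{\str A}$ and is closed under $\scr$, so by induction $h[\N]=A$ and $h$ is onto. For injectivity I would argue directly from the order: if $m<n$ in $\N$, then the relations $h(k)<h(k^{\scr})$ for $m\leq k<n$ chain together, by transitivity, into $h(m)<h(n)$, so irreflexivity gives $h(m)\neq h(n)$. Hence $h$ is a bijective homomorphism; its inverse is a homomorphism too, so $\str A\cong\N$.

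Under clause~\ref{item:wox} I would first recover the hypotheses of the previous paragraph. The least element of the well-order cannot be a successor $y^{\scr}$, since then $y<y^{\scr}$ would contradict minimality; by the clause the only possible non-successor is $1^{\str A}$, so $1^{\str A}$ is least and is not a successor. To obtain induction, let $B\included A$ contain $1^{\str A}$ and be closed under $\scr$, and suppose $A\setminus B$ is nonempty with least element $m$. Then $m\neq1^{\str A}$, so $m=y^{\scr}$ for some $y$; but $y<y^{\scr}=m$ forces $y\in B$, whence $m=y^{\scr}\in B$, a contradiction. So $\str A$ admits induction, and the clause~\ref{item:ox} argument applies.

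The main obstacle is the injectivity of $h$ in the converse, where one must avoid assuming exactly what is to be proved---that $\scr$ is injective and that $1^{\str A}$ is not a successor. The device that sidesteps the circularity is the order-chain, which uses only $x<x^{\scr}$ together with transitivity and irreflexivity. In particular it never appeals to comparability, which matters because clause~\ref{item:ox} supplies only a partial order; the argument must be written so as to respect that restriction.
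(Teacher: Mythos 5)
Your proposal is correct, and its overall architecture coincides with the paper's: the forward direction transports the standard ordering across the isomorphism with $\N$ (the paper uses $\vnn$); clause (a) yields clause (b) by the least-element-of-the-complement argument; and under clause (b) the unique homomorphism $h\colon\N\to\str A$ is shown surjective by induction in $\str A$ and injective from the ordering. The one genuine divergence is the injectivity step. The paper first invokes Landau's observation that addition can be defined recursively on any iterative structure admitting induction, proves $x<x+y$ in $\str A$ by induction there, proves $h(x+y)=h(x)+h(y)$ by induction on $y$ in $\N$, and then deduces $h(m)<h(n)$ from $m<n$. You instead obtain $h(m)<h(n)$ directly by chaining $h(k)<h(k)^{\scr}=h(k^{\scr})$ and appealing to transitivity---formally an induction in $\N$ on the difference of $n$ and $m$, which you should state as such rather than leave at ``chain together.'' Your route is slightly more economical: it bypasses the nontrivial fact that parametrized recursive definitions such as that of $+$ are valid on structures that merely admit induction (the point the paper credits to Landau), and it visibly uses only transitivity and irreflexivity of $<$, which is all clause (b) supplies. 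At bottom both arguments are the same iteration of $x<x^{\scr}$, the paper's addition being merely the bookkeeping for that iteration.
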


\begin{proof}
  Every simply infinite system is isomorphic to
  $\vnn$, which is ordered as indicated so as to satisfy~\eqref{item:wox}.

Suppose $\str B$ is ordered as indicated so as to satisfy~\eqref{item:wox}, and $\str A$ is a
substructure.  If $B\setminus 
A$ were non-empty, then its least element would be $c^{\scr}$ for some
$c$; but $c<c^{\scr}$, so $c\in A$, hence $c^{\scr}\in A$.  Therefore
$B\setminus A$ is empty.  Thus $\str B$ satisfies~\eqref{item:ox}.

Suppose finally $\str A$ is ordered as indicated and satisfies~\eqref{item:ox}.  As noted
in~\S\ref{sect:rec}, addition can be defined in $\str A$ to
satisfy~\eqref{eqn:rec+}.  Again by induction, the ordering of $\str
A$ satisfies $x<x+y$.  Let $h$ be the homomorphism from $(\N,1,\scr)$
to $\str A$.  Then $h$ is surjective by induction in $\str A$, and
$h(x+y)=h(x)+h(y)$ by induction on $y$ in $\N$.  Moreover, in $\N$, if
$x<y$, then $x+z=y$ for some $z$, so 
$h(x)+h(z)=h(y)$ and therefore $h(x)<h(y)$.  Thus $h$ is injective, so
$h$ is an isomorphism, and therefore $\str A$, like $\N$, is a simply infinite system.
\end{proof}

Ordinals suggest the following alternative form of
Theorem~\ref{thm:Zm'}%
, 
that 
is, another way to obtain the class $\Omega$ in~\eqref{Omega}.  

\begin{theorem}\label{thm:sis-wo}
 Let $\class C$ comprise every set
\begin{clauseenum}
  \item
whose every element is
$\emptyset$ or $\{x\}$ for some element $x$ \emph{and} 
\item
which has a
well-ordering $<$ such that $x<\{x\}$ for each element $x$ such that
$\{x\}$ is also an element.  
\end{clauseenum}
Then $(\bigcup\class C,\emptyset,x\mapsto\{x\})$ is a simply infinite
system, even without recourse to the Axioms of Infinity and Foundation.
\end{theorem}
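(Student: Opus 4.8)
The plan is to verify the three conditions from the Recursion Theorem (Theorem~\ref{thm:rec}) that characterise a simply infinite system: admission of induction, that $\emptyset$ be not a successor, and that every successor have a unique immediate predecessor. The last two are immediate and use neither Foundation nor Infinity. Each value $\{x\}$ of the successor operation is nonempty, hence never equal to $\emptyset$; and $\{x\}=\{y\}$ forces $x=y$ by the Extension Axiom, so immediate predecessors are unique. What genuinely requires work is, first, that $(\bigcup\class C,\emptyset,x\mapsto\{x\})$ is an iterative structure at all---that $\emptyset\in\bigcup\class C$ and that $\bigcup\class C$ is closed under $x\mapsto\{x\}$---and, second, that it admits induction.

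First I would settle the structural claims. The set $\{\emptyset\}$ belongs to $\class C$, since its sole element is $\emptyset$, it is trivially well-ordered, and the condition $x<\{x\}$ is vacuous; thus $\emptyset\in\bigcup\class C$. For closure, suppose $x\in A$ with $A\in\class C$. If $\{x\}\in A$ there is nothing to prove; otherwise put $B=A\cup\{\{x\}\}$ and well-order $B$ by retaining the well-ordering of $A$ and placing $\{x\}$ above every member of $A$. Then $B$ satisfies condition~(1), because $x\in B$ witnesses $\{x\}$ while the old witnesses for the elements of $A$ survive in $B$; and $B$ satisfies condition~(2), because the only candidate new instances are $x<\{x\}$, which holds as $\{x\}$ is the top element, and $\{x\}<\{\{x\}\}$, which never arises since $\{\{x\}\}\notin B$. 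This last point is the one that needs care: were $\{\{x\}\}\in A$, condition~(1) for $A$ would force its witness $\{x\}$ into $A$, contrary to assumption. Hence $\{x\}\in\bigcup\class C$, and only the Axioms of Elementary Sets and Union have been used.

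The heart of the matter is induction, which I would run in parallel with the proof of Theorem~\ref{thm:Zm'}, using the well-ordering where that proof used Foundation. Let $\class D\included\bigcup\class C$ contain $\emptyset$ and be closed under $x\mapsto\{x\}$; the goal is $\bigcup\class C\setminus\class D=\emptyset$. Fix $A\in\class C$ and suppose, toward a contradiction, that $A\cap(\bigcup\class C\setminus\class D)$ is nonempty. As a subset of the well-ordered set $A$, it has a least element $m$. Now $m\neq\emptyset$, since $\emptyset\in\class D$; so by condition~(1), $m=\{x\}$ for some $x\in A$, and condition~(2) yields $x<\{x\}=m$. By minimality of $m$, the element $x$ cannot itself lie in $A\cap(\bigcup\class C\setminus\class D)$, so $x\in\class D$; but then $m=\{x\}\in\class D$, contradicting $m\notin\class D$. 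Thus $A\cap(\bigcup\class C\setminus\class D)$ is empty for every $A\in\class C$, and since each member of $\bigcup\class C\setminus\class D$ belongs to some such $A$, the difference is empty.

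The hard part will be conceptual rather than computational: recognising that the clause $x<\{x\}$ is exactly what pins the would-be predecessor $x$ strictly below the least offending element $m$, so that well-ordered minimality can be exploited just as disjointness-from-one's-own-element was under Foundation in Theorem~\ref{thm:Zm'}. The only fiddly bookkeeping lies in the closure step, where adjoining $\{x\}$ as a new top must not break condition~(2); that is precisely where the observation $\{\{x\}\}\notin B$ earns its keep.
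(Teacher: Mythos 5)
Your proof is correct, but it takes a genuinely different route from the paper's. The paper does not invoke Theorem~\ref{thm:rec} at all; instead it establishes two structural facts about the members of $\class C$ --- that in each $A$ in $\class C$ the set $\{x\}$ is the \emph{immediate} successor of $x$ in the well-ordering of $A$ (nothing lies strictly between $x$ and $\{x\}$), and that of any two members of $\class C$, one is literally an initial segment of the other --- and then concludes that $\bigcup\class C$ is itself well-ordered with every element other than $\emptyset$ a successor, so that condition~\ref{item:wox} of Theorem~\ref{thm:order} applies. You instead verify the Dedekind conditions of Theorem~\ref{thm:rec} directly, establishing induction one $A$ at a time by taking the least element of $A\cap(\bigcup\class C\setminus\class D)$; this is exactly the shape of the paper's proof of Theorem~\ref{thm:Zm'}, with ``least element of a subset of $A$'' playing the role that ``element disjoint from the subset'' played there. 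Your route is more elementary --- it needs no comparability theorem for well-ordered sets and no appeal to Theorem~\ref{thm:order} --- and it has the merit of checking explicitly that $\emptyset\in\bigcup\class C$ and that $\bigcup\class C$ is closed under $x\mapsto\{x\}$ (your observation that $\{\{x\}\}\notin A\cup\{\{x\}\}$ when $\{x\}\notin A$ is indeed the point that needs care there), steps the paper leaves tacit. What the paper's route buys is more information: a single canonical well-ordering of all of $\bigcup\class C$ extending the orderings of its members, which connects the theorem to the surrounding discussion of ordinals and to the well-ordering clause of Theorem~\ref{thm:order}.
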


\begin{proof}
  Suppose $A\in\class C$.  Then $\{x\}$ is the successor of $x$ in the
  well-ordering of $A$; that is, there is no $y$ such that
  $x<y<\{x\}$.  Indeed, if there is, let $x$ be least such that there
  is; and then let $y$ be least.  But $y\neq\emptyset$, since
  $\emptyset$ must be the least element of $A$.  Hence $y=\{z\}$ for
  some $z$, and then $z<x$ by minimality of $y$, so $z<x<\{z\}$,
  contrary to the minimality of $x$. 

Now suppose also $B\in\class C$.  Then one of $A$ and $B$ is an 
initial segment of the other, when both are considered as well-ordered sets.  Indeed, one of them, say $A$, is
\emph{isomorphic} to an initial segment of the other, $B$.  If the
isomorphism is $h$, then there can be no least element $x$ of $A$ such
that $h(x)\neq x$, since if there is, then $x=\{y\}$ for some $y$, so
$h(x)=h(\{y\})=\{h(y)\}=\{y\}=x$ (since $\{y\}$ succeeds $y$, and
$\{h(y)\}$ succeeds $h(y)$, in the respective orderings). 

It now follows that $\bigcup\class C$ satisfies
condition~\ref{item:wox} in Theorem~\ref{thm:order}.
\end{proof}

Having \on, we can define the function $\cR$ on \on{} recursively
by\footnote{I use the notation of Kunen~\cite[p.~95]{MR85e:03003}.  It
  seems the original definition is by von Neumann.}
\begin{equation*}
  \cR(\alpha)=\bigcup\{\pow{\cR(\beta)}\colon\beta<\alpha\}.
\end{equation*}
Then $\bigcup\cR[\on]$ is called \wf, the class of
\textbf{well-founded} sets; it 
is a model of the set-theoretic axioms, including Foundation,
regardless of whether the latter `really' holds; so we might as well
assume that it does hold.  Then \wf{} is the class of \emph{all} sets.  

Similarly, there is a function $\cL$ on \on{} such that
$\bigcup\cL[\on]$---the class of \textbf{constructible} sets---is a
model of all of the set-theoretic axioms, including Choice, regardless
of whether the latter `really' holds; so we may assume that it does
hold.\footnote{For the same reason, we may assume that the Generalized
  Continuum Hypothesis holds; but we don't.} 

Now all of the set-theoretic axioms are consistent with those that can
be expressed in the form `Such-and-such a class is a set', along with
Extension. 

\part{}\label{part:new}

\section{Algebras and orderings}\label{new-ord}

Having $\vnn$, we can define arbitrary structures.
This is done at the beginning of any model-theory book (such as
Hodges \cite{MR94e:03002}), except
that these books generally assume that structures and their signatures are
based on \emph{sets.}\footnote{Ziegler \cite{MR1678594} treats the
  `monster model' of a complete theory as based on a proper class; but
  the signature is still a set.}  That assumption is not made here.
  
If $\class C$ is a class, and $n\in\vnn$, then $\class C^n$ can be
understood as the
class of functions from $n$ into $\class C$.  (Such functions are
indeed sets, by the Replacement Axiom, so there can be a class of
them.)  An \textbf{$n$-ary relation} on $\class C$ is a subclass of
$\class C^n$.  In particular, a \textbf{nullary} ($0$-ary) relation is
either $\emptyset$ or $\{\emptyset\}$, that is, $0$ or $1$
(the sets composing $\B$ as used in \S\ref{sect:rec}).  An
\textbf{$n$-ary operation} on $\class C$ is a function 
from $\class C^n$ into $\class C$.  In the most
precise sense, a \textbf{signature} can be defined as two disjoint
classes---of \textbf{predicates} and \textbf{function-symbols}
respectively---together with a function from their union into $\vnn$.  
A signature
$(\sig_{\mathrm p},\sig_{\mathrm f},\arity)$ may be written more
simply as $\sig$, and $\sig$ may be treated as
 $\sig_{\mathrm p}\cup\sig_{\mathrm f}$; then a 
\textbf{structure} with this signature 
is a class $\class C$ together with a function 
$s\mapsto s^{\str C}$ on $\sig$ such
that if $\arity(s)=n$, then $s^{\str C}$ is an $n$-ary relation (when
$s\in\sig_{\mathrm p}$) or
operation (when $s\in\sig_{\mathrm f}$) on $\class C$.  The class
$\class C$ is then the \textbf{universe} of the structure, and
$s^{\str C}$ is the \textbf{interpretation} of $s$ in the structure;
but as noted in~\S\ref{sect:rec}, we may refer to the
interpretation as $s$ also.  I follow the tradition of denoting a
structure by the Fraktur form of the letter used for its universe.  

An \textbf{algebra} is a structure whose signature has
only function-symbols.  If $\str A$ and $\str B$ are algebras with the
same signature, then a homomorphism from the former to the latter is a
function $h$ from $A$ to $B$, inducing the function $h$ from $A^n$ to
$B^n$ for each $n$ in $\vnn$, such that
\begin{equation}\label{eqn:hom}
  h(F^{\str A}(\tuple x))=F^{\str B}(h(\tuple x))
\end{equation}
for all $\tuple x$ in $A^n$, for all $n$-ary $F$ from the signature,
for all $n$ in $\vnn$.  In case $n=0$, \eqref{eqn:hom} becomes
$h(F^{\str A})=F^{\str B}$. 
An algebra \textbf{admits recursion} if from it
to every algebra in the same signature there is a unique
homomorphism; an algebra \textbf{admits induction} if it has no proper
sub-algebra.  
I do not know who first formulated the following generalization of Theorem~\ref{thm:rec}, 
though Enderton \cite[p.~27]{MR0337470} alludes to such a theorem:

\begin{theorem}[Recursion]\label{thm:rec-alg}
An
algebra admits recursion if and only if it admits induction and its
distinguished operations are injective and have disjoint
ranges.\hfill\qedsymbol 
\end{theorem}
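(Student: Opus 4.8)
The plan is to mimic the two-sided argument given for Theorem~\ref{thm:rec}, now with a family of operations of various arities rather than a single $\scr$. The key observation is that the three hypotheses---admitting induction, having injective distinguished operations, and having disjoint ranges---are precisely what is needed to run the existence-and-uniqueness argument for homomorphisms on the free algebra.

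First I would prove \emph{sufficiency}. Let $\str A$ satisfy the three conditions and let $\str B$ be any algebra in the same signature. To construct the unique homomorphism $h\colon A\to B$, I would imitate the relational construction from~\S\ref{sect:ax}: consider the class of all relations $R\included A\times B$ that are closed under every pair of operations, in the sense that if $F$ is $n$-ary and $x_0\mathrel Ry_0$, \dots, $x_{n-1}\mathrel Ry_{n-1}$, then $F^{\str A}(\tuple x)\mathrel RF^{\str B}(\tuple y)$ (and for nullary $F$, that $F^{\str A}\mathrel RF^{\str B}$); let $h$ be the intersection of these, or dually their union subject to a minimality clause, exactly as was done for the single-successor case. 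I would then show by \emph{induction} on $\str A$ that the domain of $h$ is all of $A$, so that $h$ is total. The crux is showing $h$ is a \emph{function}: for this one needs that each value is determined uniquely, and here is where injectivity and disjointness of ranges enter. If $h$ assigned two values to some element $a$, then $a$ is either a distinguished nullary constant or lies in the range of a positive-arity operation---the two cases being mutually exclusive by disjointness of ranges, and the representation $a=F^{\str A}(\tuple x)$ being unique by injectivity of $F^{\str A}$ together with disjointness across different~$F$. One then traces the ambiguity back to an earlier ambiguity in the arguments $\tuple x$, contradicting induction (the set of elements on which $h$ is single-valued is a subalgebra, hence all of $A$). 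That $h$ is a homomorphism is then immediate from the closure conditions, and its uniqueness follows because any two homomorphisms agree on a subalgebra, which by induction must be all of $\str A$.

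For \emph{necessity} I would argue the contrapositive: if one of the three conditions fails, exhibit an algebra $\str B$ admitting no homomorphism, or admitting more than one, from $\str A$. If $\str A$ has a proper subalgebra $\str A_0$, then the identity and the retraction-style maps can be played off against each other to violate uniqueness, just as finite cyclic groups were used after Theorem~\ref{thm:rec}. If some operation $F^{\str A}$ fails to be injective, say $F^{\str A}(\tuple x)=F^{\str A}(\tuple x')$ with $\tuple x\neq\tuple x'$, I would build a target algebra---for instance the term algebra $\tm[\sig]$ of variable-free terms, which is genuinely free---in which the corresponding terms are distinct, so that no homomorphism can respect both equality in $\str A$ and distinctness in the target. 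Failure of disjoint ranges is handled the same way: an element with two incompatible outermost-symbol representations cannot be sent consistently into the term algebra.

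The main obstacle I expect is the single-valuedness step in the sufficiency direction. Unlike the iterative case, where each non-initial element has exactly one immediate predecessor, here an element may be a value of an $n$-ary operation for various arities and symbols, and one must argue that injectivity plus disjoint ranges really does pin down a \emph{unique} outermost symbol together with a unique argument tuple. Once that unique-readability (in the style of unique parsing of terms) is in hand, the induction closes cleanly; organizing the bookkeeping across all arities simultaneously, rather than treating a single $\scr$, is where the care is required.
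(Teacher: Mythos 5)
The paper gives no proof of this theorem at all: it is stated with a terminal \qedsymbol{} and a pointer to Enderton, so there is nothing to compare your argument against except the sketch the paper gives for the single-successor case (Theorem~\ref{thm:rec}) in \S\ref{sect:ax}. Your sufficiency half is exactly the right generalization of that sketch and is sound: take $h$ to be the smallest relation closed under all pairs of operations; its domain is a subalgebra, hence all of $A$ by induction; and the set of elements with a unique $h$-value is a subalgebra because, by disjointness of ranges and injectivity, the only closure instance that can ``demand'' a pair $(F^{\str A}(\tuple a),c)$ is the one with outer symbol $F$ and arguments $\tuple a$, whose unique values force $c=F^{\str B}(\tuple b)$ --- so any extra pair can be deleted from $h$ without destroying closure, contradicting minimality. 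That is precisely the unique-readability point you flag as the crux, and it closes correctly.

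The one genuine soft spot is in your necessity argument for injectivity. You say that if $F^{\str A}(\tuple x)=F^{\str A}(\tuple x')$ with $\tuple x\neq\tuple x'$, then ``the corresponding terms are distinct'' in $\tm$; but the only map available from $A$ to $\tm$ is the unique homomorphism $g$ supplied by recursion, and nothing yet guarantees $g(x_j)\neq g(x_j')$. The missing step is the one the paper uses after Theorem~\ref{thm:rec}: the unique homomorphisms $g\colon\str A\to\tm$ and $e\colon\tm\to\str A$ compose, in either order, to a homomorphism that must equal the identity, so $g$ is a bijection and $\str A\cong\tm$; all three properties then transfer from $\tm$, where they are verified directly (unique readability of strings). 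This also repairs a presentational wrinkle: the paper only establishes the freeness and unique readability of $\tm$ \emph{after} this theorem, using its sufficiency direction, so in the necessity direction you must verify the properties of $\tm$ by hand rather than by appeal to the theorem being proved --- not circular, but worth saying explicitly. Your disjoint-ranges and proper-subalgebra cases are fine (the latter is the retraction-against-identity argument you describe, and the former can even be done with a two-element target in which the two offending symbols are interpreted as distinct constants).
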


In a signature with no nullary function-symbols, the
only algebras that admit induction are empty.
An arbitrary algebra that admits recursion can be called
\textbf{free.}  All free
algebras in the same signature are isomorphic.  

If the universe of an algebra $\str B$ is a set, then $\str B$ has a
subalgebra $\str A$ that admits induction.  Indeed, the universe of
$\str A$ is the intersection of the set
of universes of subalgebras of $\str B$.  In short, $\str A$ is the
smallest subalgebra of $\str B$.
If the
universe of $\str B$ is a proper class, then we cannot obtain $\str A$
in this way.  We might use a trick to obtain $\str A$, as in
Theorem~\ref{thm:Zm'}
or~\ref{thm:sis-wo}.  
Alternatively, if we can obtain a
\emph{free} algebra $\str F$ in the same signature, then $\str
A$ will be the image of $\str F$ in $\str B$.  

In an arbitrary algebraic signature $\sig$, we can obtain a free
algebra as a subalgebra of the algebra $\str S(\sig)$ of
\textbf{strings} of $\sig$.
The universe of $\str S$ is 
$\sig^{<\vnn}$, namely $\bigcup\{\sig^n\colon n\in\vnn\}$; but in this
context an element $k\mapsto s_k$ of $\sig^n$ is written as $s_0\cdots
s_{n-1}$.  The \textbf{length} of every element of $\sig^n$ is $n$. 
If $F$ is an $n$-ary symbol of $\sig$, then $F^{\str S}$
is the operation
\begin{equation}\label{F}
  (t_0,\dots,t_{n-1})\longmapsto\ft 
\end{equation}
of concatenation on $\str S$.  Here the length of $\ft$ is one plus
the sum of the lengths of the $t_k$.  Because the class $\vnn$ of possible
lengths is well-ordered, we have the following.  The theorem is a
requirement for doing symbolic logic, but perhaps not every logician
bothers to work out the proof.

\begin{theorem}
  $\str S(\sig)$ has a subalgebra that is a free algebra.
\end{theorem}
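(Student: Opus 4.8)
The plan is to take the subalgebra of \emph{terms} and apply the Recursion Theorem for algebras (Theorem~\ref{thm:rec-alg}). Let $\tm$ be the smallest subalgebra of $\str S$: when $\sig$ is a set, this is the intersection of all subalgebras; in general it is the class of strings admitting a finite formation sequence under the concatenation operations~\eqref{F}, a class that is definable by a formula and admits induction by construction. By Theorem~\ref{thm:rec-alg} it then suffices to verify that the operations $F^{\str S}$, restricted to $\tm$, are injective and have pairwise disjoint ranges; this is the classical \emph{unique readability} of terms.

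Disjoint ranges come for free: every string in the range of $F^{\str S}$ begins with the symbol $F$, so operations with distinct leading symbols have disjoint ranges (and a nullary symbol, being a one-symbol string, is covered by the same observation). The substance is injectivity. Supposing $\ft=\fu$ with all $t_k$ and $u_k$ in $\tm$, I would strip the common leading $F$ and argue that the decomposition of the remaining string $t_0\cdots t_{n-1}$ into terms is forced. For this the one thing needed is that no term is a proper initial segment of another term: granting it, $t_0$ and $u_0$ are terms each of which is a prefix of the common tail, so neither is a \emph{proper} prefix of the other, whence $t_0=u_0$; removing this common prefix and repeating yields $t_k=u_k$ throughout.

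The prefix lemma is the crux, and here I would use that $\vnn$ is well-ordered. Assign to each $n$-ary symbol $F$ the integer weight $n-1$, and extend the weight additively to strings, so that the weight of a string is the sum of the weights of its symbols. A single induction on length then establishes two facts at once: every term has weight $-1$, while every nonempty proper prefix of a term has weight at least $0$. In the inductive step for a term $\ft$, a nonempty proper prefix is the symbol $F$ followed by $t_0\cdots t_{j-1}$ and a prefix $q$ of $t_j$ for some $j<n$; its weight equals $n-1-j$ plus the weight of $q$, which is at least $0$ by the induction hypothesis (trivially so if $q$ is empty), and so the total is at least $n-1-j\geq 0$. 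Consequently a term, of weight $-1$, cannot be a proper prefix of a term, since every nonempty proper prefix of the latter has weight at least $0$.

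With the prefix lemma in hand, injectivity follows as sketched above, so $\tm$ admits induction, has injective operations, and has operations with disjoint ranges. By Theorem~\ref{thm:rec-alg}, $\tm$ admits recursion, hence is free, and is the desired free subalgebra of $\str S(\sig)$. I expect the only delicate point to be the weight computation underlying the prefix lemma; everything else is bookkeeping about concatenation.
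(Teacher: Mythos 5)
Your proof is correct, and its skeleton matches the paper's: isolate an inductive subalgebra of $\str S(\sig)$, check that the operations $F^{\str S}$ restricted to it have disjoint ranges and are injective, and invoke Theorem~\ref{thm:rec-alg}, with everything reducing to the prefix lemma that no term is a proper initial segment of a term. Where you genuinely diverge is in proving that lemma. The paper runs a single induction on length that yields the prefix property and unique readability together: assuming no element of $A$ shorter than $\ft$ has a proper initial segment in $A$, it compares $\ft$ with a hypothetical initial segment $\fu$ argument by argument, each pair $t_k$, $u_k$ being forced equal by the inductive hypothesis. You instead give the classical counting argument, assigning weight $n-1$ to an $n$-ary symbol and showing that terms have weight $-1$ while nonempty proper prefixes of terms have weight at least $0$. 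Your route is more modular, and the invariant is reusable (it locates exactly where a term ends inside a longer string); its one cosmetic defect here is the use of signed integers, which the ambient development supplies only $\vnn$ for---easily repaired by comparing the sum of the arities of the symbols with the length of the string rather than subtracting. Note also the slight difference in how the inductive subalgebra is obtained when $\sig$ is a proper class: you appeal to formation sequences, while the paper takes the union of all sets $b$ of strings each of whose elements is $\ft$ for some $t_k$ in $b$; both work, but both still owe the short minimal-length argument for admitting induction, which you pass over a little quickly with ``by construction.''
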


\begin{proof}
Let $\class C$ be the class of subsets $b$ of $\sig^{<\vnn}$ such that
every element of $b$ 
is $Ft_0\cdots t_{n-1}$ for some elements $t_k$ of $b$, for
some $n$-ary $F$ in $\sig$, for some $n$ in $\vnn$.  Let
$A=\bigcup\class C$.  Then $A$ is the universe of a subalgebra
$\str A$ of $\str S(\sig)$, since if $F$ is an $n$-ary element of
$\sig$, and $(t_k\colon k<n)\in A^n$, then each $t_k$ is in some
$b_k$ in $\class C$, and then 
\begin{equation*}
\bigcup\{b_k\colon k<n\}\cup\{Ft_0\cdots t_{n-1}\}\in\class C,
\end{equation*}
so $Ft_0\cdots t_{n-1}$ is in $A$.

The algebra $\str A$ admits induction.  Indeed, if not, then it has a
proper subalgebra with universe $B$.  An element of $A\setminus B$ of
minimal length has the form $\ft$, where each $t_k$
must therefore belongs to $B$; but then $\ft$ must also
be in $B$.

Immediately the operations $F^{\str S}$ have disjoint ranges.

Proper initial substrings of elements of $A$ are not
elements of $A$.  Indeed, suppose this is so for every element of $A$
that is shorter than the element $\ft$.  If $\ft$ has an initial
segment that is in $A$, then this segment must have the form $\fu$,
where one of $t_0$ and $u_0$ is an initial segment of the other, so by
inductive hypothesis they are both equal.  Likewise, if, for some $k$
in $n$, we have that $t_i$ and
$u_i$ are equal when $i<k$, then $t_k=u_k$.  By induction in
$n$, we have $\ft=\fu$.  By induction on lengths, proper initial
segments of elements of $A$ are not elements of $A$.

Consequently each operation $F^{\str S}$ on $A$ is
injective. 
By Theorem~\ref{thm:rec-alg} then,
$\str A$ does indeed admit recursion.
\end{proof}

The free algebra guaranteed by the theorem can be denoted by
\begin{equation*}
  \tm;
\end{equation*}
it comprises the \textbf{closed terms} of $\sig$ in
\textbf{\L ukasiewicz} or
\textbf{`Polish' notation}~\cite[n.~91, p.~38]{MR18:631a}.  For example, suppose $\sig=\{0,\scr,\scrt\}$, where $0$ is nullary, and $\scr$ and $\scrt$ are singulary.  It will be useful to note that the elements of $\tm$ can be arranged in a tree:
\begin{equation*}
\xymatrix{
&&&0\ar[dll]\ar[drr]&&&\\
&\scr0\ar[dl]\ar[dr]&&&&\scrt0\ar[dl]\ar[dr]&\\
\scr\scr0&&\scrt\scr0&&\scr\scrt0&&\scrt\scrt0
}
\end{equation*}
In the signature $\{0,+\}$, the ordering of $\tm$ is more complicated:
\begin{equation*}
\xymatrix{
&&\pl\pl0\pl\pl000\pl\pl0\pl00\pl00&\\
\pl0\pl\pl000\ar[urr] &&& \pl\pl0\pl00\pl00\ar@{.>}[ul]\\
&\pl\pl000\ar@{.>}[ul]&&\pl0\pl00\ar[u]\\
&&\pl00\ar[ul]\ar@{.>}[uur]\ar@{.>}[ur]&\\
&&0\ar@(l,d)[uuull]\ar@{.>}[uul]\ar[u]\ar@{.>}[u]\ar[uur]
}
\end{equation*}

Again, it was because we already had $\vnn$ as a class that we could
obtain $\tm$ without working out a full analogue of Theorem~\ref{thm:Zm'}
or~\ref{thm:sis-wo}.  However, I wish now, in the remaining sections,
to work out an analogue of
$\vnn$ itself in the signature $\sig$.

\section{Numbers, generalized}

If $\sig$ is an arbitrary algebraic signature, 
we want to obtain a class $\vnns$ that is to $\vnn$ as $\sig$ is to
$\{1,\scr\}$.  We can consider $\vnn$ to arise as follows.  

One starts with the assumption that there is \emph{some} simply infinite system $(\N,1,\scr)$.  With some difficulty, one shows that there is a total ordering of $\N$ such that $x<x^{\scr}$ for all $x$ in $\N$; then one shows that $\N$ is well-ordered by $<$. 
One might for example use the method that Skolem preferred to avoid, as noted in \S~\ref{sect:rec}:
\begin{equation*}
x<y\iff\Exists zx+z=y.
\end{equation*}
Regardless of Skolem's concerns, an infelicity in this definition is its dependence on addition, which in turn depends on whether one treats the initial element of $\N$ as $1$ in the usual sense.  In any case,
 Now one has a function $g$ on $\N$ as in~\eqref{gg}, so that $g(x)=g[\pred x]$.  One then computes
\begin{align}\label{ggg}
&
\begin{aligned}[t]
g(1)
&=g[\emptyset]\\
&=\emptyset,
\end{aligned}
&&
\begin{aligned}[t]
g(x+1)
&=g[\pred{x+1}]\\
&=g[\pred x\cup\{x\}]\\
&=g[\pred x]\cup g[\{x\}]\\
&=g(x)\cup\{g(x)\}.
\end{aligned}
\end{align}
Let $x\cup\{x\}$ be denoted by
\begin{equation*}
x'.
\end{equation*}
Thus $g$ is a homomorphism from $(\N,1,\scr)$ to
$(\universe,\emptyset,{}')$, and the latter has a substructure with
universe $g[\N]$ that admits induction. 
By the Foundation Axiom, the operation $x\mapsto x'$ on $\universe$ is
injective.  Therefore $g$ is an embedding, and $(g[\N],\emptyset,{}')$
is a simply infinite system.   
The ordering of $g[\N]$ induced from $\N$ by $g$ is membership.  By
induction, each element of $g[\N]$ is transitive.  One then defines
$\on$ to consist of the transitive sets that are well-ordered by
membership.  Then $(\on,\emptyset,{}')$ is an iterative structure;
moreover, $\on$ is transitive and well-ordered by membership.  An
element of $\on$ that neither is $\emptyset$ nor belongs to the image
of $x\mapsto x'$ is a \textbf{limit.}  One defines $\vnn$ as the class
of all elements of $\on$ that neither are limits nor contain limits.
Then $\vnn$ contains $\emptyset$ and is closed under $x\mapsto x'$,
but no proper subclass $\class C$ of $\vnn$ is the universe of a
substructure of $(\vnn,\emptyset,{}')$, since the least element of
$\vnn\setminus\class C$ is either $\emptyset$ or else $\alpha'$ for
some $\alpha$ in $\class C$.  Therefore one has recovered $g[\N]$ as
$\vnn$.  In particular, there is no need to use the Peano Axioms to
define an ordering that well-orders $\N$; such an ordering is induced
from $\vnn$. 

Again, as Skolem observed, the ordering of $\N$ can also be defined
recursively, without reference to $\vnn$, by  
\begin{align}\label{n+1}
\pred 1
&=\emptyset,&
\pred{n+1}
&=\pred n\cup\{n\}.
\end{align}
We want similarly to define an ordering on the free algebra $\tm$.
Meanwhile, as a first generalization of~\eqref{ggg}, on $\tm$
we have the 
\textbf{height} function, $\hgt$, given recursively by
\begin{equation}\label{eqn:hgt}
  \hgt(\ft)=\bigcup\bigl\{\hgt(t_k)\cup\{\hgt(t_k)\}\colon k<n\bigr\}.
\end{equation}
By induction, $\hgt(x)$ is in $\vnn$, and
$\hgt(\ft)$ is the greatest of the numbers \mbox{$\hgt(t_k)+1$} (so it is $0$ if $n=0$).
In a generalization of~\eqref{n+1}, we can make the recursive
definitions 
\begin{gather*}
\pred{\ft}=
  \bigcup\{\pred{t_k}\cup\{t_k\}:k<n\},\\
\predk{\ft}=
\begin{cases}
  \pred{t_k}\cup\{t_k\},&\text{ if }k<n,\\
\emptyset,&\text{ if }k\geq n.
\end{cases}
\end{gather*}
Immediately,
\begin{equation}\label{eqn:pred}
  \pred{\ft}=\bigcup\{\predk{\ft}\colon k\in\vnn\}.
\end{equation}
Let us also write
\begin{align}\label{eqn:<<k}
  t<u&\iff t\in\pred u,&t<_ku\iff t\in \predk u.
\end{align}
An ordering \textbf{directs} a class if every finite subset
has an upper bound in the class with respect to the ordering.  An
ordering is \textbf{well-founded} on a class if every section of
the class is a set and every subset of the class has a minimal element.
\begin{lemma}
On $\tm$:
\begin{clauseenum}
\item\label{order-gen-1}
$x<y$ if and only if $x<_ky$ for some $k$ in $\vnn$;
\item\label{order-gen-2}
if $x<y$ and $y<_kz$, then $x<_kz$;
\item \label{order-gen-3}
$<$ and $<_k$ are well-founded orderings;
\item\label{order-gen-4}
$<$ directs $\predk x$;
\item\label{order-gen-5}
$t_k$ is maximal with respect to $<$ in $\predk{\ft}$, assuming $k<n$. 
\end{clauseenum}
\end{lemma}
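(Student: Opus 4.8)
The plan is to prove all five clauses by working from the recursive definitions of $\pred{\cdot}$, $\predk{\cdot}$, and $\hgt$, using induction on height (justified by the fact that $\hgt(x)\in\vnn$ and $\vnn$ is well-ordered). First, for clause~\eqref{order-gen-1}, I would simply unwind~\eqref{eqn:pred}: since $\pred{\ft}=\bigcup\{\predk{\ft}\colon k\in\vnn\}$, an element $x$ lies in $\pred y$ exactly when it lies in some $\predk y$, which by~\eqref{eqn:<<k} is precisely the assertion that $x<y$ iff $x<_ky$ for some $k$. For clause~\eqref{order-gen-5}, I would observe that $\predk{\ft}=\pred{t_k}\cup\{t_k\}$ when $k<n$, and that by the definition of $\pred{t_k}$ every element of $\pred{t_k}$ is a proper initial-segment-free subterm of $t_k$, hence strictly shorter; so $t_k$ itself, being the unique longest element, is maximal with respect to $<$ in $\predk{\ft}$.

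Next I would handle clause~\eqref{order-gen-4}: to show $<$ directs $\predk x$, note again that when $x=\ft$ and $k<n$, we have $\predk x=\pred{t_k}\cup\{t_k\}$, and $t_k$ is an upper bound for this whole set (every element of $\pred{t_k}$ satisfies $<t_k$ by definition of $\pred{t_k}$, and $t_k\leq t_k$). When $k\geq n$, $\predk x=\emptyset$ and the condition is vacuous. Thus every (finite, indeed entire) subset of $\predk x$ has the single upper bound $t_k$, so $<$ directs it. For clause~\eqref{order-gen-2}, transitivity-like behaviour, I would proceed by induction on $\hgt(z)$: if $y<_kz$ with $z=Fz_0\cdots z_{m-1}$, then $k<m$ and $y\in\pred{z_k}\cup\{z_k\}$, so either $y=z_k$ or $y<z_k$; combining with $x<y$ and using that $<$ restricted to $\pred{z_k}\cup\{z_k\}$ is transitive downward (which follows from the recursive structure of $\pred{z_k}$ and the inductive hypothesis applied to the smaller-height subterm $z_k$), I would conclude $x\in\pred{z_k}\cup\{z_k\}=\predk z$, i.e. $x<_kz$.

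The main obstacle is clause~\eqref{order-gen-3}, that $<$ and $<_k$ are well-founded, which requires both that every section is a set and that every nonempty subset has a minimal element. For the section condition, I would show by induction on height that $\pred x$ is a set: it is a finite union of the sets $\pred{t_k}\cup\{t_k\}$, each a set by inductive hypothesis, so $\pred x$ is a set and a fortiori $\predk x$ is. For the minimality condition, the key is that $x<y$ forces $\hgt(x)<\hgt(y)$; this follows by induction from~\eqref{eqn:hgt}, since any $x\in\pred{\ft}$ lies in some $\pred{t_k}\cup\{t_k\}$ with $k<n$, whence $\hgt(x)\leq\hgt(t_k)<\hgt(\ft)$. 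Given a strictly decreasing chain under $<$, the heights would form a strictly decreasing chain in $\vnn$, contradicting the well-ordering of $\vnn$; equivalently, given any nonempty subset, an element of minimal height is $<$-minimal. The same height argument handles $<_k$, since $<_k$ refines $<$ by clause~\eqref{order-gen-1}. The delicate point to get right is ensuring the height strictly drops in the $x=t_k$ boundary case versus the $x<t_k$ case, but in both we have $\hgt(x)\leq\hgt(t_k)<\hgt(\ft)$, so strictness holds throughout.
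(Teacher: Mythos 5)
Your argument is correct and follows essentially the same route as the paper's: the first clause by unwinding~\eqref{eqn:pred}, the second by induction on the term, well-foundedness via the monotone invariant $x<y\implies\hgt(x)\in\hgt(y)$ (the paper uses height throughout, where you also invoke string length for the fifth clause), and the last two clauses from the observation that $t_k$ is the greatest element of $\predk{\ft}$. The only point left tacit is that the third clause also asserts that $<$ and $<_k$ are \emph{orderings}, i.e.\ irreflexive and transitive; but irreflexivity is immediate from your height inequality, and transitivity from the first two clauses, exactly as in the paper.
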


\begin{proof}
Condition~\eqref{order-gen-1} is a restatement of~\eqref{eqn:pred},
and~\eqref{order-gen-2} is
\begin{equation*}
  y<_kz\implies \pred y\included\predk z,
\end{equation*}
which is established by induction on $z$.  Indeed, suppose the claim
holds when
$z\in\{t_0,\dots,t_{n-1}\}$, but now $y<\ft$.  Then
either $y<_kt_k$ or $y=t_k$ for some $k$ in~$n$.  In the former case, by
inductive hypothesis and~\eqref{order-gen-1}, 
$\pred y\included\predk{t_k}\included\pred{t_k}$.
Hence, in either case,
\begin{equation*}
  \pred y\included\pred{t_k}\included\predk{\ft},
\end{equation*}
so the claim holds when $z=\ft$.

By~\eqref{order-gen-1} and~\eqref{order-gen-2},
the relations $<$ and $<_k$ are transitive.
To complete~\eqref{order-gen-3}, we observe
by induction on $u$ that
\begin{equation*}
  v<u\implies \hgt(v)\in\hgt(u).
\end{equation*}
As $\in$ is irreflexive, so is $<$, and hence so is $<_k$,
by~\eqref{order-gen-1}.  
As $\in$ is well-founded, and the classes $\pred y$ are all sets, $<$
and $<_k$ are well-founded. 

Finally, both~\eqref{order-gen-4} 
and~\eqref{order-gen-5} 
follow from
the observation that, by definition,
$t_k$ is the \emph{maximum} element of $\predk{\ft}$ with respect to $<$, if $k<n$.
\end{proof}

Proving 
the lemma
would take more work if the
corresponding ordering $\in$ of $\vnn$ were not available.  One could
use the ordering on~$\N$ as defined by~\eqref{n+1}; but proving
directly that it is a well-ordering takes more work than proving the
same for $\in$ on $\vnn$.
In any case, we can now prove a generalization of Theorem~\ref{thm:order}.

\begin{theorem}\label{thm:free}
An algebra in a signature $\sig$ is free if and only if
\begin{clauseenum}
  \item\label{free-1}
it has orderings $<$ and $<_k$ for each $k$ in $\vnn$, with
corresponding sets of predecessors as in~\eqref{eqn:<<k},
 such that
\begin{clauseenum}
\item
$x<y$ if and only if $x<_ky$ for some $k$ in $\vnn$,
\item
if $x<y$ and $y<_kz$, then $x<_kz$,
\item
$<$ directs $\predk x$,
\item
$x_k$ is a maximal element with respect to $<$ of
$\predk{F(x_0,\dots,x_{n-1})}$, assuming $k<n$, 
\end{clauseenum}
\item\label{free-2}
the distinguished operations have disjoint ranges, and
\item
one of the following:
\begin{clauseenum}
  \item\label{free-3-a}
the union of these ranges is the whole underlying class of the
algebra, and $<$ is well-founded, or
\item\label{free-3-b}
the algebra admits induction.
\end{clauseenum}
\end{clauseenum}
\end{theorem}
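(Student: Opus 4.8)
The plan is to route everything through the Recursion Theorem for algebras, Theorem~\ref{thm:rec-alg}, by which an algebra is free exactly when it admits induction, its operations have disjoint ranges, and its operations are injective. The argument then splits into the two implications of the biconditional, and in the converse direction into the two alternatives~\eqref{free-3-a} and~\eqref{free-3-b}.

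For the forward direction I would use that every free algebra in the signature $\sig$ is isomorphic to $\tm$, since all free algebras are isomorphic and $\tm$ is free. Transporting the two families of orderings $<$ and $<_k$ on $\tm$ across such an isomorphism, the preceding Lemma supplies the four conditions of clause~\eqref{free-1} together with well-foundedness of $<$; freeness of $\tm$ yields the disjoint ranges of clause~\eqref{free-2} and, through Theorem~\ref{thm:rec-alg}, the induction of clause~\eqref{free-3-b}; and since every element of $\tm$ has the form $\ft$, the ranges of the operations cover the whole algebra, giving clause~\eqref{free-3-a} as well. Thus a free algebra satisfies the stated conditions, with \emph{both} alternatives of the last clause available at once.

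For the converse I would assume clauses~\eqref{free-1} and~\eqref{free-2} and one of~\eqref{free-3-a}, \eqref{free-3-b}, then verify the three hypotheses of Theorem~\ref{thm:rec-alg}. Disjoint ranges is clause~\eqref{free-2} verbatim. For injectivity of an operation $F$, suppose $F(x_0,\dots,x_{n-1})=F(y_0,\dots,y_{n-1})=z$. For each $k<n$ the maximality condition makes both $x_k$ and $y_k$ maximal in $\predk{z}$, while the directing condition makes $<$ direct $\predk{z}$. The crucial observation is that a maximal element of a \emph{directed} class is automatically its greatest, hence its unique, element: given a maximal $m$ and any $a$ in the class, an upper bound $b$ of $\{m,a\}$ satisfies $m\leq b$, which forces $m=b$ by maximality, whence $a\leq m$. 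Applying this to $\predk{z}$ gives $x_k=y_k$ for all $k<n$, so $F$ is injective.

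It remains to obtain induction. Under clause~\eqref{free-3-b} it is assumed outright, while under clause~\eqref{free-3-a} I would argue by well-founded descent: were there a proper subalgebra with universe $B$, then, $<$ being well-founded, $A\setminus B$ would contain a $<$-minimal element $z$; as the ranges exhaust the algebra, $z=F(x_0,\dots,x_{n-1})$ for some symbol $F$, and each $x_k$, lying in $\predk{z}$, satisfies $x_k<z$ by the condition relating $<$ to $<_k$, hence $x_k\in B$ by minimality; closure of $B$ under $F$ then puts $z\in B$ (immediate when $F$ is nullary, a subalgebra containing the constants), contradicting $z\in A\setminus B$. So $A\setminus B$ is empty, the algebra admits induction, and Theorem~\ref{thm:rec-alg} declares it free. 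The one genuinely delicate step is the injectivity argument, whose whole force comes from coupling the directing condition with the maximality condition to upgrade the merely \emph{maximal} coordinates furnished by the latter into the \emph{unique} coordinates that injectivity demands; the forward transport of the Lemma and the well-founded descent are routine by comparison.
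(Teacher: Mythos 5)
Your proposal is correct, and its overall decomposition is exactly the paper's: the forward direction transports the Lemma's conclusions across an isomorphism with $\tm$ and invokes Theorem~\ref{thm:rec-alg}; alternative~\eqref{free-3-a} is reduced to~\eqref{free-3-b} by taking a $<$-minimal element of the complement of a putative proper subalgebra; and the whole converse then rests on verifying injectivity so that Theorem~\ref{thm:rec-alg} applies. The one place you genuinely diverge is the injectivity step. The paper first uses induction to show that the homomorphism $h$ from $\tm$ is surjective, then takes a \emph{minimal} term $\ft$ witnessing a failure of injectivity, and only then applies the directedness-plus-maximality argument to $h(t_k)$ and $h(u_k)$ in $\predk{h(\ft)}$. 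You apply the same key observation---that in a class directed by $<$ a maximal element is the greatest, hence the only maximal, element---directly to the coordinates $x_k$ and $y_k$ of two preimages of $z$ under $F$, since condition (d) of clause~\eqref{free-1} makes both of them maximal in $\predk z$ and condition (c) makes that set directed. This is a genuine streamlining: it dispenses with the surjective homomorphism and the minimal counterexample entirely (and indeed the paper's appeal to minimality does no visible work in its own final step), at the cost of nothing, since condition (d) is stated for arbitrary tuples. The only point worth making explicit is the antisymmetry needed to conclude that two greatest elements coincide, which follows from irreflexivity and transitivity of $<$ as the paper understands orderings.
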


\begin{proof}
If an algebra in $\sig$ is free, then it is isomorphic to $\tm$,
so~\eqref{free-1} follows 
from the lemma.
Also~\eqref{free-2} and~\eqref{free-3-a} follow from this and
Theorem~\ref{thm:rec-alg}. 

Suppose $\str B$ satisfies~\eqref{free-1}, \eqref{free-2},
and~\eqref{free-3-a}, and $\str A$ is a subalgebra.  If $B\setminus A$
is nonempty, then it has a minimal element of the form
$F(x_0,\dots,x_{n-1})$; but then the $x_k$ must be in $A$, and then
so must $F(x_0,\dots,x_{n-1})$ be.  Thus $\str B=\str A$, so it
satisfies~\eqref{free-3-b}. 

Finally, suppose $\str A$ satisfies~\eqref{free-1}, \eqref{free-2},
and~\eqref{free-3-b}.   By Theorem~\ref{thm:rec-alg}, to conclude that
$\str A$ is free, it is enough to show that each $F^{\str A}$ is
injective.  By induction in $\str A$, the homomorphism~$h$ from $\tm$
to $\str A$ is
surjective.  If some $F^{\str A}$ is not injective, then there is some
\emph{minimal} $\ft$ in $\tm$ for which
\begin{equation*}
  F^{\str A}(h(t_0),\dots,h(t_{n-1}))=h(\ft)
=h(\fu)=F^{\str A}(h(u_0),\dots,h(u_{n-1})) 
\end{equation*}
for some $\fu$, although $h(t_k)\neq h(u_k)$ for some $k$.
But $h(t_k)$ 
and $h(u_k)$ are maximal, and the set of them has an upper bound, in $\predk{h(\ft)}$;
hence they are equal.  This contradiction shows $\str A$ is free.
\end{proof}

Towards obtaining $\vnns$ as desired,
suppose $x$ is an $(n+1)$-tuple
$(x_0,\dots,x_n)$ for some $n$ in $\vnn$.  
Let us say that
the \textbf{type} of $x$ is $x_n$; if $k<n$, let us say that the
elements of $x$ of
\textbf{grade} $k$ are the elements of $x_k$.  We may use the following
notation:
\begin{gather*}
y\in_k(x_0,\dots,x_n)\iff k<n\land y\in x_k,\\
\gel(x)=
\begin{cases}
  \bigcup\{x_k\colon k<n\},&\text{ if $x=(x_0,\dots,x_n)$ for some $n$
    in $\vnn$ and some $x_k$},\\
\emptyset,&\text{ otherwise},\\
\end{cases}\\
y\in'x\iff y\in \gel(x).
\end{gather*}
Here $\gel(x)$ is the set of `graded' elements of $x$.  Any elements
of $x_n$, as such, are left out.  

The notion of well-foundedness makes
sense for arbitrary binary relations,\footnote{This observation, with
  the ensuing definition, is apparently due to Zermelo \cite[II.5.1,
    p.~63]{MR1924429}.} such as $\in'$.  Indeed, a binary 
relation $\class R$ is \textbf{well-founded} on a class $\class C$ if
\begin{enumerate}
\item
the class $\{x\colon x\in\class C\land x\mathrel{\class R}a\}$ is a
set whenever $a\in\class C$, 
and 
\item\label{item:minimal}
if $b$ is a nonempty subset of $\class C$, then $b$ has a
\textbf{minimal} element with respect to~$\class R$, that is, an
element $d$ such that 
\begin{equation*}
b\cap\{x\colon x\mathrel{\class R}d\}=\emptyset. 
\end{equation*}
\end{enumerate}
By the axioms of Infinity and Choice, condition~\eqref{item:minimal}
is equivalent to
\begin{enumerate}\setcounter{enumi}1\renewcommand{\labelenumi}{(\theenumi*)}
\item
there is no sequence $(a_n\colon n\in\vnn)$ of elements
of $\class C$ such that $a_{n+1}\mathrel{\class R}a_n$ for each $n$ in
$\vnn$.   
\end{enumerate}
In particular,
well-founded relations are irreflexive.
The Foundation Axiom is just that membership is well-founded on
$\universe$.  If $\class R$ is well-founded on $\class C$, then
$(\class C,\class R)$ \textbf{admits induction} in the sense that the
only subclass $\class D$ of $\class C$ for which 
\begin{equation*}
\class C\cap\{x\colon x\mathrel{\class R}a\}\included\class D
\implies a\in\class D
\end{equation*}
for all $a$ in $\class C$ is $\class C$ itself.

\begin{theorem}\label{thm:uuu}
The relation $\in'$ is well-founded on $\universe$.
\end{theorem}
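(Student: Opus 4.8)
The plan is to imitate the way the preceding lemma deduced well-foundedness of $<$ and $<_k$, namely by mapping the relation into a structure already known to be well-founded. There the map was $\hgt$ into $\vnn$; here the natural choice is the rank function into $\on$. Since we are assuming $\wf=\universe$, every set $x$ has a rank $\operatorname{rank}(x)$ in $\on$, the least $\alpha$ with $x\included\cR(\alpha)$, and one checks directly from the definition of $\cR$ that $y\in x$ forces $\operatorname{rank}(y)<\operatorname{rank}(x)$. One of the two conditions for well-foundedness is then already in hand: for each $a$ the section $\{x\colon x\in'a\}$ is exactly $\gel(a)$, which is a set by definition. So everything will turn on producing minimal elements, and for that I would establish a single rank inequality.

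The key step is the observation that one step of $\in'$ conceals a short chain of genuine membership. Suppose $y\in'x$. Then $x=(x_0,\dots,x_n)$ for some $n$ and $y\in x_k$ for some $k<n$. Reading the tuple as the function $\{(k,x_k)\colon k\leq n\}$ and unwinding the Kuratowski coding of~\eqref{eqn:op}, the component $x_k$ sits several membership-levels below $x$:
\begin{equation*}
  y\in x_k\in\{k,x_k\}\in(k,x_k)\in x.
\end{equation*}
Each of these four links is a genuine membership, so rank strictly decreases four times, and $y\in'x$ implies $\operatorname{rank}(y)<\operatorname{rank}(x)$.

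Granting this inequality, the minimal-element condition follows exactly as in the lemma, with $\operatorname{rank}$ playing the role of $\hgt$. Given a nonempty $b\included\universe$, the ranks of its members form, by Replacement, a nonempty set of ordinals and hence have a least element; let $d\in b$ attain it. If some $x$ in $b$ satisfied $x\in'd$, then $\operatorname{rank}(x)<\operatorname{rank}(d)$, contradicting the choice of $d$; so $b\cap\{x\colon x\in'd\}=\emptyset$ and $d$ is $\in'$-minimal. I expect the only delicate point to be the displayed chain itself—verifying that the function-plus-Kuratowski-pair encoding really does bury each $x_k$, and therefore each of its members, strictly inside $x$ under $\in$. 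That is bookkeeping rather than difficulty, and once it is settled the whole result rests on the Foundation Axiom alone (through $\wf=\universe$), with no appeal to Choice or to the descending-sequence formulation~(2*).
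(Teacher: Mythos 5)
Your proof is correct, and its key step takes a genuinely different route from the paper's. Both arguments begin the same way: the section of $a$ under $\in'$ is a set (you identify it with $\gel(a)$ outright; the paper bounds it by $\bigcup\bigcup\bigcup a$), and both unwind the tuple-as-function-of-Kuratowski-pairs coding to see that one step of $\in'$ hides a short chain of genuine memberships and therefore drops one strictly down the hierarchy $\cR$. The divergence is in how the minimal-element clause of well-foundedness is obtained. The paper instead verifies the descending-sequence formulation: it supposes a sequence $(x_n\colon n\in\vnn)$ with $x_{n+1}\in'x_n$ and extracts a strictly descending sequence of ordinals. As literally stated, the minimal-element clause then follows only via the equivalence of the two formulations, which the paper itself says requires Infinity and Choice---and indeed the paper later flags that this theorem ``relies on the axioms of Foundation, Infinity, and Choice'' and introduces condition~(7) on $\ons$ precisely to escape that reliance. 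You prove the minimal-element clause directly: $y\in'x$ forces $\operatorname{rank}(y)<\operatorname{rank}(x)$ through the four-link chain, so an element of least rank in a nonempty $b$ is $\in'$-minimal. What this buys is axiomatic economy---only Foundation (through $\wf=\universe$) and Replacement are used, with no appeal to Infinity, Choice, or the sequence formulation---which is exactly the economy the paper goes out of its way to recover later by other means. Your chain $y\in x_k\in\{k,x_k\}\in(k,x_k)\in x$ does check out against the paper's own expansion of $(x_0,\dots,x_n)$, including the degenerate case $k=x_k$.
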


\begin{proof}
  Using the definition~\eqref{eqn:op} for ordered pairs,
we have
\begin{gather*}
  (x_0,\dots,x_n) =\Bigl\{\bigl\{\{0\},\{0,x_0\}\bigr\},\dots,
  \bigl\{\{n\},\{n,x_n\}\bigr\}\Bigr\},\\
\bigcup\bigcup(x_0,\dots,x_n)=\{0,\dots,n,x_0,\dots,x_n\},
\end{gather*}
so $\{y\colon y\in'x\}\included\bigcup\bigcup\bigcup x$, a set.
Suppose there were a sequence $(x_n\colon n\in\vnn)$ such that always
$x_{n+1}\in'x_n$.  Then always $x_{n+1}\in\bigcup\bigcup\bigcup x_n$.
But then, assuming $x_n\in\cR(\alpha_n)$, we should have also
$\bigcup\bigcup\bigcup x_n\in\cR(\alpha_n)$, and then 
 $x_{n+1}\in\cR(\alpha_{n+1})$ for some $\alpha_{n+1}$ that was
strictly less than $\alpha_n$.  There is no such sequence
$(\alpha_n\colon n\in\vnn)$ of ordinals. 
\end{proof}

For a better generalization of~\eqref{ggg} than~\eqref{eqn:hgt}, we
make $\universe$ into an $\sig$-algebra by defining
\begin{equation}\label{FV}
  F^{\universe}(x_0,\dots,x_{n-1})
  =(\gel(x_0)\cup\{x_0\},\dots,\gel(x_{n-1})\cup\{x_{n-1}\},F)
\end{equation}
for all $F$ in $\sig$.

\begin{theorem}\label{thm:V-alg}
  The operations $F^{\universe}$ are injective and have disjoint
  ranges. 
\end{theorem}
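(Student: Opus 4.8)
The plan is to handle the two assertions by different means: the disjointness of ranges reduces to inspecting the last coordinate (the \emph{type}) of the tuples produced by~\eqref{FV}, whereas the injectivity is a generalization of the Foundation-based injectivity of the von Neumann successor $x\mapsto x\cup\{x\}$ and will rest on Theorem~\ref{thm:uuu}.

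For the disjoint ranges, I would note that by~\eqref{FV} every value of $F^{\universe}$ is an $(n+1)$-tuple whose final coordinate is the symbol $F$ itself, where $n=\arity(F)$. Given distinct symbols $F$ and $G$ of $\sig$ with $\arity(F)=n$ and $\arity(G)=m$: if $n\neq m$, then the two ranges consist of tuples of different lengths, hence functions with distinct finite-ordinal domains, so they are disjoint; and if $n=m$, then the final coordinates $F$ and $G$ differ, so again the ranges are disjoint. (For nullary symbols injectivity is vacuous, and this same argument disposes of their ranges.)

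For injectivity, suppose $F^{\universe}(x_0,\dots,x_{n-1})=F^{\universe}(y_0,\dots,y_{n-1})$. Equal tuples have equal coordinates, so for each $k<n$,
\[
\gel(x_k)\cup\{x_k\}=\gel(y_k)\cup\{y_k\},
\]
and it remains to deduce $x_k=y_k$. Recalling that $z\in'w$ means $z\in\gel(w)$, I would argue exactly as in the von Neumann case: if $x_k\neq y_k$, then membership of $x_k$ in the right-hand set forces $x_k\in\gel(y_k)$, that is $x_k\in'y_k$, and symmetrically $y_k\in'x_k$; but then $\{x_k,y_k\}$ is a nonempty set with no $\in'$-minimal element, contradicting the well-foundedness of $\in'$ on $\universe$ (Theorem~\ref{thm:uuu}). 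Hence $x_k=y_k$ for every $k<n$, so the input tuples coincide.

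The main obstacle is precisely this injectivity step and its dependence on Theorem~\ref{thm:uuu}: the map $x\mapsto\gel(x)\cup\{x\}$ plays the role of a generalized successor, and without the well-foundedness of $\in'$ across all of $\universe$ there would be nothing to rule out two distinct sets each lying among the other's graded elements. The remaining points are routine bookkeeping about the shape of the tuples in~\eqref{FV}; the only care needed is the observation that tuples of different lengths are genuinely distinct, which holds because they are functions (equivalently, sets of Kuratowski pairs, as in the proof of Theorem~\ref{thm:uuu}) with distinct domains.
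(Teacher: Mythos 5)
Your proof is correct and follows essentially the same route as the paper's: disjointness of ranges is read off from the final coordinate (the paper simply calls this ``immediate from the definitions''), and injectivity is obtained by deducing $x_k\in'y_k$ and $y_k\in'x_k$ from $\gel(x_k)\cup\{x_k\}=\gel(y_k)\cup\{y_k\}$ and contradicting the well-foundedness of $\in'$ from Theorem~\ref{thm:uuu}. Your extra detail on why tuples of different lengths or with different final symbols are distinct is a harmless elaboration of what the paper leaves implicit.
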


\begin{proof}
  That the ranges are disjoint is immediate from the definitions.  If
  \begin{equation*}
  F^{\universe}(x_0,\dots,x_{n-1})= 
  F^{\universe}(y_0,\dots,y_{n-1}), 
  \end{equation*}
  but $x_k\neq y_k$ for some $k$,
  then, since
  \begin{equation*}
\gel(x_k)\cup\{x_k\}=
\gel(y_k)\cup\{y_k\},
  \end{equation*}
we have $x_k\in \gel(y_k)$ and $y_k\in \gel(x_k)$, that is, $x_k\in'y_k$ and
$y_k\in'x_k$.  But this contradicts Theorem~\ref{thm:uuu}.  
\end{proof}

Considering $\universe$ as an $\sig$-algebra, we can now define
\begin{equation*}
  \vnns
\end{equation*}
as the homomorphic
image of $\tm$ in $\universe$.
Then $\vnns$ is free by Theorems~\ref{thm:rec-alg}
and~\ref{thm:V-alg}.
But I propose to obtain $\vnns$ alternatively as a certain subclass of a
class $\ons$, just as $\vnn$ is obtained from $\on$.

\section{Ordinals, generalized}

Let us define
\begin{equation*}
\predk x=\{y\colon y\in_k x\},
\end{equation*}
so that
\begin{equation*}
\predk{(x_0,\dots,x_n)}=
\begin{cases}
x_k,&\text{ if }k<n,\\
\emptyset,&\text{ otherwise,}
\end{cases}
\end{equation*}
and
\begin{equation*}
\gel(x)=\bigcup\{\predk x\colon k\in\vnn\}.
\end{equation*}
Let us say that $x$ is \textbf{$k$-transitive} if
\begin{equation*}
  y\in\predk x\implies \gel(y)\included\predk x.
\end{equation*}
In order to define $\ons$, we first define a class $\Ds$, which
comprises all $x$ such that 
\begin{clauseenum}
  \item\label{d1}
$x$ is an $(n+1)$-tuple having the type of an $n$-ary element of
    $\sig$ for some $n$ in $\vnn$, and then
$\predk x\neq\emptyset$ whenever $k<n$;
\item\label{d2}
each element $y$ of $\gel(x)$ is an $(m+1)$-tuple having the type of
an $m$-ary element of $\sig$ for some $m$ in $\vnn$, and then $\predl
y\neq\emptyset$ whenever $\ell<m$;
\item\label{d3}
$x$ is $k$-transitive for each $k$ in $\vnn$;
\item\label{d4}
each element of $\gel(x)$ is $k$-transitive for each $k$ in $\vnn$. 
\end{clauseenum}
We shall presently define $\ons$ as a subclass of $\Ds$.  Meanwhile,
we already have some analogues to properties of $\on$.

\begin{lemma}\label{lem:in'-trans}
  The relation $\in'$ is transitive on $\Ds$.
\end{lemma}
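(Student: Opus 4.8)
The plan is to unwind the two definitions $y\in'x\iff y\in\gel(x)$ and $\gel(x)=\bigcup\{\predk x\colon k\in\vnn\}$ and then observe that transitivity of $\in'$ on $\Ds$ is precisely what the $k$-transitivity clause~\ref{d3} in the definition of $\Ds$ was engineered to deliver. This is in exact analogy with the way the transitivity of a von~Neumann ordinal (a transitive set) makes $\in$ transitive on that ordinal; here the role of ``transitive set'' is played, grade by grade, by the ``$k$-transitive'' tuples.

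Concretely, I would take $z\in\Ds$ and suppose $x\in'y$ and $y\in'z$, with goal $x\in'z$. First I would unwind $y\in'z$, which says $y\in\gel(z)=\bigcup\{\predk z\colon k\in\vnn\}$, so that $y\in\predk z$ for some \emph{particular} $k$ in $\vnn$. This is the only place a choice is made, and it pins down the index $k$ for the remainder of the argument.

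The key step is then to invoke clause~\ref{d3}: since $z\in\Ds$, the element $z$ is $k$-transitive, which by definition means $y\in\predk z\implies\gel(y)\included\predk z$. Combining this with $x\in'y$, i.e.\ $x\in\gel(y)$, I would conclude $x\in\predk z\included\gel(z)$, that is, $x\in'z$, as wanted.

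I expect no genuine obstacle: the substance of the lemma is front-loaded into the definition of $\Ds$, so the work lies in having formulated clauses~\ref{d1}--\ref{d4} correctly rather than in the present deduction. Two small points are worth flagging. First, the argument uses only that the outer element $z$ lies in $\Ds$ (indeed only its $k$-transitivity); neither $x$ nor $y$ need belong to $\Ds$, so the statement proved is slightly stronger than advertised. Second, the one thing to verify is that the \emph{same} index $k$ produced by $y\in\predk z$ is the one whose $\predk z$ absorbs all of $\gel(y)$---but this is exactly the shape of the $k$-transitivity hypothesis, so no mismatch can occur.
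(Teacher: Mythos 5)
Your proof is correct and is essentially identical to the paper's: both unwind the outer membership to pin down a grade $k$, invoke the $k$-transitivity clause~\eqref{d3} to absorb $\gel$ of the middle element into $\predk{}$ of the outer one, and conclude. Your side remark that only the outermost element's membership in $\Ds$ (indeed only its $k$-transitivity) is used is accurate, and the paper's proof tacitly relies on exactly the same fact.
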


\begin{proof}
  If $\Ds$ contains $x$, $y$, and $z$, where $y\in'x$ and $z\in'y$,
  then $y\in\gel(x)$, so $y\in\predk x$ for some $k$, and therefore
  $\gel(y)\included\predk x\included\gel(x)$ by~\eqref{d3}; but 
  also $z\in\gel(y)$, so $z\in\gel(x)$, that is, $z\in'x$.
\end{proof}

Also, like $\on$ itself, $\Ds$ has a kind of transitivity: 

\begin{lemma}\label{lem:Ds-trans}
  If $x\in\Ds$, then $\gel(x)\included\Ds$.
\end{lemma}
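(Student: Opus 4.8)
The plan is to fix an arbitrary $y\in\gel(x)$ and verify directly that $y$ satisfies the four defining conditions~\eqref{d1}--\eqref{d4} of $\Ds$. Since $x\in\Ds$, by~\eqref{d1} it is an $(n+1)$-tuple, and $y\in\gel(x)$ means $y\in\predk x$ for some $k<n$. The single fact that drives every case is the inclusion $\gel(y)\included\gel(x)$. This is the key step already isolated in the proof of Lemma~\ref{lem:in'-trans}: from $y\in\predk x$ and the $k$-transitivity of $x$ (condition~\eqref{d3} for $x$) one gets $\gel(y)\included\predk x\included\gel(x)$, so every graded element of $y$ is already a graded element of $x$. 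Note that reproducing this step inline is cleaner than quoting the lemma as such, since the lemma as stated presupposes all three of $x$, $y$, $z$ in $\Ds$, whereas here $\gel(y)\included\gel(x)$ needs only $x\in\Ds$ and $y\in\gel(x)$.

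Granting this inclusion, I would check the conditions in turn. Condition~\eqref{d1} for $y$ --- that $y$ is an $(m+1)$-tuple of the type of an $m$-ary symbol with $\predl y\neq\emptyset$ for $\ell<m$ --- is precisely what condition~\eqref{d2} for $x$ asserts about the element $y$ of $\gel(x)$. Condition~\eqref{d2} for $y$ concerns the elements of $\gel(y)$; but these all lie in $\gel(x)$ by the inclusion above, so condition~\eqref{d2} for $x$ applies to each of them and delivers exactly what is needed. Condition~\eqref{d3} for $y$, that $y$ is $k$-transitive for every $k$, is immediate from condition~\eqref{d4} for $x$, which says precisely that every element of $\gel(x)$ is $k$-transitive for every $k$. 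Finally condition~\eqref{d4} for $y$ asks that every element of $\gel(y)$ be $k$-transitive for every $k$; since $\gel(y)\included\gel(x)$, this again follows from condition~\eqref{d4} for $x$.

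There is no real obstacle here: the proof is a bookkeeping exercise matching the requirements on $y$ against the requirements on $x$, with $\gel(y)\included\gel(x)$ doing all the work. The one point to keep straight is the role of the two \emph{outer} conditions~\eqref{d2} and~\eqref{d4} in the definition of $\Ds$, which are built in precisely so that the graded elements of an element of $\gel(x)$ inherit the tuple-shape and transitivity properties. Without them the lemma would fail, since membership in $\Ds$ imposes requirements not only on $x$ itself but on the whole of $\gel(x)$.
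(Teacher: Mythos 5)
Your proof is correct and follows essentially the same route as the paper's: condition~\eqref{d1} for $y$ from~\eqref{d2} for $x$, condition~\eqref{d3} for $y$ from~\eqref{d4} for $x$, and the inclusion $\gel(y)\included\predk x\included\gel(x)$ (via the $k$-transitivity of $x$) to get~\eqref{d2} and~\eqref{d4} for $y$. The paper's version is just a more compressed rendering of the same bookkeeping.
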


\begin{proof}
  Suppose $x\in\Ds$ and $y\in\gel(x)$.  Then $y$ satisfies~\eqref{d1},
  by~\eqref{d2} for $x$.  Also $y$ satisfies~\eqref{d3}, by~\eqref{d4}
  for $x$.  Finally, $y\in\predk x$ for some $k$ in $\vnn$, so
  by~\eqref{d3} for $x$ we have $\gel(y)\included\predk x$, hence
  $\gel(y)\included\gel(x)$.  So $y$ satisfies~\eqref{d2}
  and~\eqref{d4}.  Therefore $y\in\Ds$.
\end{proof}

When $\sig=\{1,\scr\}$, then $\Ds$ is not really anything new.
Indeed, informally,
\begin{equation*}
  \Dss=\Bigl\{
(1),
\bigl(\bigl\{(1)\bigr\},\scr\bigr),
\Bigl(\Bigl\{(1),\bigl(\bigl\{(1)\bigr\},\scr\bigr)\Bigr\},\scr\Bigr),
\dots\Bigr\}.
\end{equation*}
Using the definition in~\eqref{FV}, we have:
\begin{theorem}
$(\Dss,1^{\universe},\scr^{\universe})\cong
  (\on,\emptyset,\alpha\mapsto\alpha+1)$.    
\end{theorem}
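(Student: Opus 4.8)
The plan is to produce an explicit isomorphism $\phi$ from $(\on,\emptyset,\alpha\mapsto\alpha+1)$ onto $(\Dss,1^{\universe},\scr^{\universe})$. Using transfinite recursion on the well-ordered class $\on$ (the class version of the recursion~\eqref{g}, applied as the paper already does for $\cR$), I would define $\phi(0)=1^{\universe}=(1)$ and, for $\alpha>0$, $\phi(\alpha)=(\phi[\alpha],\scr)$, where $\phi[\alpha]=\{\phi(\beta)\colon\beta<\alpha\}$. A direct computation from the definition~\eqref{FV} of $\scr^{\universe}$ shows this is a homomorphism of iterative structures: since $\gel(\phi(\alpha))=\phi[\alpha]$ for $\alpha>0$ while $\gel((1))=\emptyset$, in every case $\gel(\phi(\alpha))\cup\{\phi(\alpha)\}=\phi[\alpha+1]$, so $\scr^{\universe}(\phi(\alpha))=(\phi[\alpha+1],\scr)=\phi(\alpha+1)$, and $\phi(0)=1^{\universe}$.

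Next I would check $\phi[\on]\included\Dss$ by transfinite induction, verifying conditions~\eqref{d1}--\eqref{d4} for $x=\phi(\alpha)$. Conditions~\eqref{d1} and~\eqref{d2} are immediate, since $\phi(\alpha)$ is a tuple of the correct type with $\predo{\phi(\alpha)}=\phi[\alpha]\neq\emptyset$ whenever $\alpha>0$, and each member of $\gel(\phi(\alpha))=\phi[\alpha]$ lies in $\Dss$ by the inductive hypothesis; the transitivity conditions~\eqref{d3} and~\eqref{d4} follow because $\phi[\beta]\included\phi[\alpha]$ when $\beta<\alpha$ and because each $\phi(\beta)$ is already $k$-transitive by hypothesis. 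Injectivity is then easy: for $\alpha<\beta$ one has $\phi(\alpha)\in\phi[\beta]=\gel(\phi(\beta))$, that is $\phi(\alpha)\in'\phi(\beta)$, and since $\in'$ is well-founded on $\universe$ by Theorem~\ref{thm:uuu} it is irreflexive, so $\phi(\alpha)\neq\phi(\beta)$.

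The substantive part is surjectivity, $\Dss\included\phi[\on]$, which I would prove by well-founded ($\in'$-)induction on $\Dss$; this is legitimate because $\in'$ is well-founded by Theorem~\ref{thm:uuu} and, by Lemma~\ref{lem:Ds-trans}, every $\in'$-predecessor of a member of $\Dss$ is again in $\Dss$. Let $x\in\Dss$. If $x=(1)$ then $x=\phi(0)$. Otherwise $x=(S,\scr)$ with $S=\gel(x)$, and by the inductive hypothesis together with injectivity each $y\in S$ equals $\phi(\beta_y)$ for a unique ordinal $\beta_y$; put $A=\{\beta_y\colon y\in S\}$. The crux is to show $A$ is an initial segment of $\on$, hence an ordinal $\alpha$: if $\beta\in A$ and $\gamma<\beta$, then $\phi(\beta)\in\predo x$, so the $0$-transitivity of $x$ from condition~\eqref{d3} gives $\gel(\phi(\beta))\included\predo x=S$, and since $\phi(\gamma)\in'\phi(\beta)$ we obtain $\phi(\gamma)\in S$, i.e. $\gamma\in A$. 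Then $S=\phi[\alpha]$, and because condition~\eqref{d1} forces $S\neq\emptyset$ and hence $\alpha>0$, we conclude $x=(\phi[\alpha],\scr)=\phi(\alpha)$.

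I expect this downward-closure step to be the main obstacle: it is precisely where the $k$-transitivity built into the definition of $\Ds$ must be exploited, linking the graded membership $\in'$ to the ordinal ordering and guaranteeing that the grade-$0$ contents of an element of $\Dss$ are exactly the $\phi$-images of a genuine initial segment of $\on$. Everything else reduces to unwinding~\eqref{FV} and invoking the well-foundedness of Theorem~\ref{thm:uuu}. With $\phi$ shown to be a bijective homomorphism between the two iterative structures, the asserted isomorphism follows.
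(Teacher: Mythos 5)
Your proposal is correct and follows essentially the same route as the paper: the same recursively defined map ($\phi(0)=(1)$, $\phi(\alpha)=(\phi[\alpha],\scr)$ for $\alpha>0$), the same verification that it is an injective homomorphism into $\Dss$, and the same $\in'$-induction for surjectivity, with the downward-closure of $\{\beta\colon\phi(\beta)\in\gel(x)\}$ extracted from $0$-transitivity exactly as in the paper's proof. No gaps.
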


\begin{proof}
Note that $1^{\universe}$ is the $1$-tuple $(1)$.
  From $\on$ to $\universe$, there is a function $\class H$ defined
  recursively by
  \begin{equation*}
\class H(\alpha)=
  \begin{cases}
(1),& \text{ if }\alpha=\emptyset;\\
(\class H[\alpha],\scr),& \text{ if }\alpha\neq\emptyset.
  \end{cases}
  \end{equation*}
Then 
\begin{equation}\label{eqn:GH}
\gel(\class H(\alpha))=\predo{\class H(\alpha)}=\class H[\alpha].
\end{equation}
Therefore
\begin{equation*}
  \class H(\alpha+1)
=(\class H[\alpha]\cup\{\class H(\alpha)\},\scr)
=(\gel(\class H(\alpha))\cup\{\class H(\alpha)\},\scr)
=\scr^{\universe}(\class H(\alpha)),
\end{equation*}
which means $\class H$ is a homomorphism of $\{1,\scr\}$-algebras.  If
$\alpha<\beta$, then 
$\class H(\alpha)\in\class H[\beta]$, 
so $\class H(\alpha)\in'\class H(\beta)$ by~\eqref{eqn:GH}.  Therefore
$\class H$ is order-preserving and, in particular, injective.  It remains to show that the range of $\class H$ is $\Dss$.

In the definition of $\Dss$, conditions~\eqref{d1} and~\eqref{d2}
hold for elements of $\class H[\on]$ by definition of $\class H$.
Then~\eqref{d3} follows; that is,
$\class H(\beta)$ is always $k$-transitive.  Indeed, it is
trivially so, if $\beta=0$ or $k>0$; and by~\eqref{eqn:GH}, we have
that, if $x\in\predo{\class
  H(\beta)}$, then $x=\class H(\alpha)$ for some $\alpha$, so
\begin{equation*}
\gel(x)=\class H[\alpha]\included\class H[\beta]=\predo{\class
  H(\beta)}.
\end{equation*}
Since each element of $\gel(\class H(\beta))$ is some $\class
H(\alpha)$, it is $k$-transitive too: thus~\eqref{d4}.  So
$\class H$ maps $\on$ into $\Dss$.

By induction in $\Dss$ with respect to the well-founded relation
$\in'$, we establish
$\Dss\included\class H[\on]$.  Suppose $x\in\Dss$.  Then
$\gel(x)\included\Dss$ by the lemma.
As an inductive hypothesis, suppose $\gel(x)\included\class H[\on]$.
If $\class H(\beta)\in\gel(x)$, 
and $\alpha<\beta$, then, since $\class H(\alpha)\in\gel(\class
H(\beta))$ by~\eqref{eqn:GH}, we have
$\class H(\alpha)\in\gel(x)$ by the
$0$-transitivity of $x$.  Thus $\{\alpha\colon\class
H(\alpha)\in\gel(x)\}$ is transitive, so it is an ordinal $\beta$, and
$\class H[\beta]=\gel(x)$.  If $\beta=\emptyset$, then $x=(1)=\class
H(\beta)$ by the second part of condition~\eqref{d1} in the definition
of $\Dss$; if
$\beta\neq\emptyset$, then $\gel(x)\neq\emptyset$, so $x$ can only be
$(\gel(x),\scr)$, which is also $\class H(\beta)$.  So $\class H$ is
an isomorphism between $\on$ and $\Dss$.
\end{proof}

An element $x$ of $\Ds$ can be called a \textbf{limit} if some set
$\predk x$ has no maximal element with respect to
$\in'$.  In general, there may be $x$ in $\Ds$ such that neither $x$
nor any element of $\gel(x)$ is a limit, but still $x$ is not in
$\vnns$.  Indeed, if $\sig=\{a,b,\scr\}$, where $a$ and $b$
are nullary, and $\scr$ is singulary as usual, then $\Dss$ contains
$\bigl(\bigl\{(a),(b)\bigr\},\scr\bigr)$, which is not in $\vnns$.
So we let
\begin{equation*}
  \ons
\end{equation*}
denote the class of $x$ in $\Ds$ that meet the additional conditions
\begin{clauseenum}\setcounter{enumi}{4}
  \item\label{d5}
$\in'$ directs each set $\predk x$;
\item\label{d6}
$\in'$ directs each set $\predl y$ for each $y$ in $\gel(x)$.
\end{clauseenum}
We already know from Theorem~\ref{thm:uuu} that $\in'$ is well-founded
on $\ons$ and its elements; but this relies on the axioms of
Foundation, Infinity, and Choice. 
To avoid this reliance, we can impose one additional condition on the
elements $x$ of $\ons$:
\begin{clauseenum}\setcounter{enumi}{6}
  \item\label{d7}
$\in'$ is well-founded on $\gel(x)$.
\end{clauseenum}
As will be shown with Theorem~\ref{thm:vnns-in-ons} below, this definition of $\ons$
is enough to ensure that distinguishing $\vnns$ as a subclass of $\ons$ is analogous to distinguishing $\vnn$ as a subclass of $\on$.  Meanwhile, we can work out a particular example, as in the next section.

\section{Trees}

Suppose $\sig=\{0,\scr,\scrt\}$, where $0$ is nullary, and $\scr$ and $\scrt$ are singulary.  

Let $x\in\ons$.  Then $\predo x=\gel(x)$.  Consequently, $\gel(x)$ is well-ordered by $\in'$.  Indeed, to establish this, by Lemma~\ref{lem:in'-trans} and~\eqref{d7}, it is enough to note that $\in'$ is total on $\gel(x)$.  Suppose on the contrary that $\gel(x)$ has incomparable elements $y$ and $z$.  Then $y,z\in'a_0$ for some $a_0$ in $\gel(x)$, by~\eqref{d5}.  But then $y,z\in'a_1$ for some $a_1$ in $\gel(a_0)$, by~\eqref{d6}.  But $a_1\in\gel(x)$ by~\eqref{d3}; so $y,z\in'a_2$ for some $a_2$ in $\gel(a_1)$, and so on.  This violates~\eqref{d7}.

Since each 

\section{Numbers as ordinals}

\begin{theorem}\label{thm:in'-wf}
  The relation $\in'$ is a well-founded ordering of $\ons$.
\end{theorem}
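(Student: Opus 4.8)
The plan is to verify, directly from the defining clauses of $\ons$, the two ingredients of a well-founded ordering: irreflexivity together with transitivity, and the two conditions in the definition of well-foundedness (that sections are sets and that nonempty subsets have minimal elements). The whole point is to do this using the built-in clause~\eqref{d7}, so that no appeal to Theorem~\ref{thm:uuu}---and hence to Foundation, Infinity, or Choice---is required.

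Two of these are cheap. Since $\ons\included\Ds$, Lemma~\ref{lem:in'-trans} already gives that $\in'$ is transitive on $\ons$. For the set-section condition, I would note that for $a$ in $\ons$ every $x$ with $x\in'a$ lies in $\gel(a)$; hence $\{x\in\ons\colon x\in'a\}$ is a subclass of the set $\gel(a)$ and so is itself a set by Separation. What remains, and what I regard as the substance of the proof, is the existence of minimal elements.

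For that I would take an arbitrary nonempty subset $b$ of $\ons$ and fix some $a\in b$. If no element of $b$ lies $\in'$-below $a$, then $a$ is already minimal and we are done. Otherwise $b\cap\gel(a)$ is a nonempty subset of the \emph{set} $\gel(a)$, and clause~\eqref{d7} says precisely that $\in'$ is well-founded on $\gel(a)$; so $b\cap\gel(a)$ has an element $d$ minimal, with respect to $\in'$, among the members of $b\cap\gel(a)$. The crucial move is to promote $d$ to a minimal element of all of $b$: if some $y\in b$ had $y\in'd$, then from $d\in\gel(a)$ (that is, $d\in'a$) and Lemma~\ref{lem:in'-trans} we would get $y\in'a$, whence $y\in b\cap\gel(a)$, contradicting the minimality of $d$ there. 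Thus $d$ is minimal in $b$. Irreflexivity then follows in the usual way, since an $x$ with $x\in'x$ would make $\{x\}$ a nonempty subset with no minimal element.

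The main obstacle is exactly this promotion step, and it is instructive that it rests on transitivity rather than on any \emph{global} well-foundedness of $\in'$. Because $\ons$ is a proper class, we cannot invoke well-foundedness on all of $\ons$ at once; the device is that the search for a minimal element of $b$ can be confined to the single set $\gel(a)$, where~\eqref{d7} applies, while transitivity on $\Ds$ guarantees that nothing outside $\gel(a)$ can undercut a minimum found inside it. This localization is what allows the argument to dispense with Theorem~\ref{thm:uuu} and the three axioms it depends on.
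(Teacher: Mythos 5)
Your proposal is correct and follows essentially the same route as the paper's own proof: transitivity from Lemma~\ref{lem:in'-trans}, sections being sets because they lie inside $\gel(a)$, and the localization of the minimal-element search to $b\cap\gel(a)$ using clause~\eqref{d7}, with transitivity promoting the local minimum to a global one. The only cosmetic difference is that the paper gets irreflexivity directly from~\eqref{d7} (applying it to $\{x\}\included\gel(x)$ when $x\in'x$) rather than as a corollary of the minimality argument, which changes nothing of substance.
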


\begin{proof}
  By Lemma~\ref{lem:in'-trans}, $\in'$ is transitive on $\ons$.  If
  $x\in\ons$, then, because $\in'$ is well-founded on $\gel(x)$
  by~\eqref{d7}, in particular $x\notin'x$.  Therefore $\in'$ orders
  $\ons$.
Also $\{y\colon y\in\ons\land y\in'x\}$ is a set, being a subclass of
$\gel(x)$. 
  Suppose $b\included\ons$ and $x\in b$.  If $x$ is not a
  minimal element of $b$ with respect to $\in'$, then $b\cap\{y\colon
  y\in'x\}$, being a subset of $\gel(x)$, has a minimal element; but
  this minimal element is minimal in $b$ as well, by transitivity of
  $\in'$.  Therefore $\in'$ is well-founded on $\ons$.
\end{proof}

\begin{theorem}\label{thm:ons-trans}
  If $x\in\ons$, then
  $\gel(x)\included\ons$.
\end{theorem}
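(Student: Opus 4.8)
The plan is to fix $x\in\ons$ and an arbitrary $y\in\gel(x)$, and to verify that $y$ meets each of the seven conditions~\eqref{d1}--\eqref{d7} that define membership in $\ons$. Conditions~\eqref{d1}--\eqref{d4} come for free: since $x\in\Ds$, Lemma~\ref{lem:Ds-trans} gives $\gel(x)\included\Ds$, so $y\in\Ds$ and hence $y$ already satisfies~\eqref{d1}--\eqref{d4}. The only real content is to push the three extra conditions~\eqref{d5}, \eqref{d6}, and~\eqref{d7} down from $x$ to $y$.

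The engine of the argument is the containment $\gel(y)\included\gel(x)$. Since $y\in\gel(x)$, we have $y\in\predk x$ for some $k$ in $\vnn$; then the $k$-transitivity of $x$, which is condition~\eqref{d3}, yields $\gel(y)\included\predk x\included\gel(x)$. With this in hand the three conditions follow almost by quotation. For~\eqref{d5} I observe that, because $y$ itself lies in $\gel(x)$, condition~\eqref{d6} for $x$ says exactly that $\in'$ directs each set $\predk y$---and that is precisely~\eqref{d5} for $y$. For~\eqref{d6} applied to $y$, I take any $z\in\gel(y)$; by the containment above $z\in\gel(x)$, so condition~\eqref{d6} for $x$ directs each set $\predl z$, which is~\eqref{d6} for $y$. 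Finally, for~\eqref{d7} I note that $\in'$ is well-founded on $\gel(x)$ by condition~\eqref{d7} for $x$, and both clauses in the definition of well-foundedness (sectionhood and existence of minimal elements) are inherited by the subclass $\gel(y)$. Assembling these, $y$ satisfies all of~\eqref{d1}--\eqref{d7}, so $y\in\ons$.

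There is no genuine obstacle here: the theorem is a bookkeeping extension of Lemma~\ref{lem:Ds-trans} to the additional conditions, following exactly the pattern by which~\eqref{d2} and~\eqref{d4} transferred to elements of $\gel(x)$ in that lemma. The one step that deserves care is the containment $\gel(y)\included\gel(x)$, since everything downstream relies on it; it in turn depends only on selecting the right $k$ with $y\in\predk x$ and invoking the $k$-transitivity of $x$. Everything else is a direct appeal to the defining clauses for $x$.
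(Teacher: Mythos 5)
Your proof is correct and follows essentially the same route as the paper's: conditions~\eqref{d1}--\eqref{d4} via Lemma~\ref{lem:Ds-trans}, condition~\eqref{d5} for $y$ read off directly from~\eqref{d6} for $x$, and conditions~\eqref{d6} and~\eqref{d7} for $y$ obtained from the containment $\gel(y)\included\gel(x)$ supplied by the $k$-transitivity clause~\eqref{d3}. The only difference is that you spell out the steps the paper leaves implicit.
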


\begin{proof}
Suppose $x\in\ons$ and $y\in \gel(x)$.  By Lemma~\ref{lem:Ds-trans},
we know $y\in\Ds$.
By~\eqref{d6} for $x$, we know $y$ satisfies~\eqref{d5}; by~\eqref{d3}
also, $y$ satisfies~\eqref{d6}.  Finally, since
$\gel(y)\included\gel(x)$ by~\eqref{d3}, we have~\eqref{d7} for $y$.
\end{proof}

\begin{theorem}
  $\ons$ is an $\sig$-subalgebra of $\universe$.
\end{theorem}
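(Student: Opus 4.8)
The plan is to fix an $n$-ary symbol $F$ in $\sig$ together with elements $x_0,\dots,x_{n-1}$ of $\ons$, set $w=F^{\universe}(x_0,\dots,x_{n-1})$, and verify that $w$ satisfies each of conditions \eqref{d1}--\eqref{d7}, so that $w\in\ons$. From the definition \eqref{FV} we have $\predk w=\gel(x_k)\cup\{x_k\}$ when $k<n$ and $\predk w=\emptyset$ otherwise, whence $\gel(w)=\bigcup\{\gel(x_k)\cup\{x_k\}\colon k<n\}$. Since each $x_k$ lies in $\ons$, hence in $\Ds$, and since $\gel(x_k)\included\Ds$ by Lemma~\ref{lem:Ds-trans}, every element of $\gel(w)$ lies in $\Ds$; this is the source of almost all the required facts.

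First I would dispatch the membership $w\in\Ds$. Condition \eqref{d1} is immediate: $w$ is an $(n+1)$-tuple of the type of the $n$-ary symbol $F$, and $\{x_k\}\included\predk w$, so $\predk w\neq\emptyset$ for $k<n$. Condition \eqref{d2} holds because each element of $\gel(w)$ is either some $x_k$, which satisfies \eqref{d1} as a member of $\Ds$, or a member of some $\gel(x_k)$, which satisfies \eqref{d2} for $x_k$. For \eqref{d3} I would show $w$ is $k$-transitive: for $k<n$ take $y\in\predk w=\gel(x_k)\cup\{x_k\}$; if $y=x_k$ then $\gel(y)=\gel(x_k)\included\predk w$, while if $y\in\gel(x_k)$, so $y\in'x_k$, then every $z$ in $\gel(y)$ satisfies $z\in'y\in'x_k$, hence $z\in'x_k$ by Lemma~\ref{lem:in'-trans}, giving $\gel(y)\included\gel(x_k)\included\predk w$. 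Condition \eqref{d4} follows since the elements of $\gel(w)$ are the $x_k$, which are $k$-transitive by \eqref{d3} for $x_k$, and the members of the $\gel(x_k)$, which are $k$-transitive by \eqref{d4} for $x_k$. Thus $w\in\Ds$.

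The directedness conditions rest on a single observation: because $y\in'x_k$ for every $y$ in $\gel(x_k)$, the element $x_k$ is the maximum of $\predk w=\gel(x_k)\cup\{x_k\}$ with respect to $\in'$, so this set is trivially directed; this gives \eqref{d5}. Condition \eqref{d6} then reduces to the hypotheses on the $x_k$: an element $y$ of $\gel(w)$ is either some $x_k$, for which $\in'$ directs each $\predl y$ by \eqref{d5} for $x_k$, or a member of some $\gel(x_k)$, for which $\in'$ directs each $\predl y$ by \eqref{d6} for $x_k$.

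Finally, for \eqref{d7} I would not argue well-foundedness by hand but invoke the preceding theorems. By Theorem~\ref{thm:ons-trans}, $\gel(x_k)\included\ons$ for each $k$, and the $x_k$ themselves lie in $\ons$, so $\gel(w)\included\ons$; since $\in'$ is well-founded on $\ons$ by Theorem~\ref{thm:in'-wf}, it is well-founded on the subclass $\gel(w)$. Hence $w$ satisfies all of \eqref{d1}--\eqref{d7}, so $w\in\ons$, and $\ons$ is closed under every $F^{\universe}$, the nullary case $F^{\universe}=(F)$ being the degenerate instance $n=0$ in which conditions \eqref{d2}--\eqref{d7} are vacuous. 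Therefore $\ons$ is an $\sig$-subalgebra of $\universe$. I expect the only delicate points to be the index bookkeeping in \eqref{d3}--\eqref{d4} and the recognition that \eqref{d7} is exactly where Theorems~\ref{thm:in'-wf} and \ref{thm:ons-trans} must be used, rather than a direct well-foundedness argument.
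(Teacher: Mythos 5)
Your proposal is correct and follows essentially the same route as the paper's proof: verify conditions \eqref{d1}--\eqref{d7} for $F^{\universe}(x_0,\dots,x_{n-1})$ using the identity $\predk{F^{\universe}(x_0,\dots,x_{n-1})}=\gel(x_k)\cup\{x_k\}$, with $x_k$ as the maximum of that set giving \eqref{d5}, and Theorems~\ref{thm:in'-wf} and~\ref{thm:ons-trans} giving \eqref{d7}. The only differences are cosmetic: in \eqref{d3} you route through Lemma~\ref{lem:in'-trans} where the paper invokes \eqref{d3} for $x_k$ directly, and in \eqref{d7} you restrict well-foundedness from $\ons$ to the subclass $\gel(w)$ rather than taking a union of well-founded pieces.
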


\begin{proof}
Suppose $x_k\in\ons$ when $k<n$, and let
$y=F^{\universe}(x_0,\dots,x_{n-1})$.  We show that $y$ satisfies the defining
conditions of $\ons$.  
\begin{asparaenum}[1.]
\item
First, $y$ satisfies~\eqref{d1}
by definition, that is,~\eqref{FV}.  
\item
Suppose $z\in \gel(y)$; equivalently, $z\in\predk y$ for some
$k$ in $n$.  But
\begin{equation}\label{eqn:predky}
  \predk y=\gel(x_k)\cup\{x_k\},
\end{equation}
so in particular either $z=x_k$ or $z\in \gel(x_k)$.  By~\eqref{d1}
and~\eqref{d2} respectively for $x_k$, we have that $y$
satisfies~\eqref{d2}.  
\item
Also, if $z=x_k$, then
\begin{equation*}
\gel(z)=\gel(x_k)\included\predk y,
\end{equation*}
while if $z\in \gel(x_k)$, then $z\in\predl{x_k}$ for some $\ell$, so
that
\begin{equation*}
  \gel(z)\included\predl{x_k}\included \gel(x_k)\included\predk y
\end{equation*}
by~\eqref{d3} for $x_k$ and~\eqref{eqn:predky}.  Thus~$y$ satisfies~\eqref{d3}.
\item
If $z=x_k$, then $z$ is $\ell$-transitive for each $\ell$ in $\vnn$
by~\eqref{d3} for $x_k$, while if $z\in \gel(x_k)$, then $z$ is
$\ell$-transitive for each $\ell$ in $\vnn$ by~\eqref{d4} for
$x_k$.  Therefore~$y$ satisfies~\eqref{d4}, and so $y\in\Ds$.
\item
By~\eqref{eqn:predky}, we have that $x_k$ is the greatest
element of $\predk y$ with respect to $\in'$, so this ordering directs
$\predk y$. 
Thus $y$ satisfies~\eqref{d5}.  
\item
If $z=x_k$, then $\in'$ directs
$\predl z$ by~\eqref{d5} for $x_k$, while if $z\in \gel(x_k)$, then
$\in'$ directs $\predl z$ by~\eqref{d6} for $x_k$.  Therefore $y$
satisfies~\eqref{d6}.
\item
We have $\gel(y)=\bigcup\{\gel(x_k)\cup\{x_k\}\colon k<n\}$.  By
Theorems~\ref{thm:in'-wf} and~\ref{thm:ons-trans}, since $\in'$ is
well-founded on each $\gel(x_k)$, it is also well-founded on each set
$\gel(x_k)\cup\{x_k\}$ and hence on their union.
So $y$ is in $\ons$.\qedhere
\end{asparaenum}
\end{proof}

Now $\vnns$ can be understood as the image of $\tm$ in $\ons$.  But
again, we want to obtain $\vnns$ independently from $\tm$.

\begin{theorem}\label{thm:vnns-in-ons}
  $\vnns$ consists of those $x$ in $\ons$ such that neither $x$ nor any
  element of $\gel(x)$ is a limit.
\end{theorem}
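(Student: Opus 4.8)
The plan is to prove two inclusions. Write $\class W$ for the class of those $x$ in $\ons$ for which neither $x$ nor any element of $\gel(x)$ is a limit; the goal is $\vnns=\class W$.

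First I would check $\vnns\included\ons$. Since $\tm$ is free, it admits recursion, so there is a unique homomorphism from $\tm$ to the $\sig$-algebra $\ons$ of the preceding theorem; composing with the inclusion $\ons\hookrightarrow\universe$ and using uniqueness of the homomorphism from $\tm$ to $\universe$, I conclude that $\vnns$, being the image of the latter, lies in $\ons$. By Theorem~\ref{thm:ons-trans} then, $\gel(x)\included\ons$ for each $x$ in $\vnns$, so the notion of limit applies throughout. To finish $\vnns\included\class W$, I would argue by induction in $\vnns$, which admits induction as a free algebra: it suffices to show that the elements $x$ of $\vnns$ for which neither $x$ nor any member of $\gel(x)$ is a limit form a subalgebra. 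If $x=F^{\universe}(x_0,\dots,x_{n-1})$ with each $x_k$ of this kind, then for $k<n$ we have $\predk x=\gel(x_k)\cup\{x_k\}$ by~\eqref{FV}, with $\in'$-maximum $x_k$, so no $\predk x$ lacks a maximal element and $x$ is not a limit; and every member of $\gel(x)=\bigcup\{\gel(x_k)\cup\{x_k\}\colon k<n\}$ is some $x_k$ or lies in some $\gel(x_k)$, hence is not a limit.

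For $\class W\included\vnns$ I would use induction on the well-founded relation $\in'$ on $\ons$ (Theorem~\ref{thm:in'-wf}). Fix $x$ in $\class W$. Each $y$ in $\gel(x)$ lies in $\ons$ by Theorem~\ref{thm:ons-trans}, is not a limit, and satisfies $\gel(y)\included\gel(x)$ by the transitivity of $\in'$ (Lemma~\ref{lem:in'-trans}), so no element of $\gel(y)$ is a limit; thus $y\in\class W$, and the inductive hypothesis gives $\gel(x)\included\vnns$. It remains to write $x$ as $F^{\universe}$ of members of $\gel(x)$, where $F$ is the $n$-ary type of $x$. For each $k<n$ the set $\predk x$ is nonempty by~\eqref{d1}, directed by $\in'$ by~\eqref{d5}, and, since $x$ is not a limit, has a maximal element; the crucial point is that directedness promotes this maximal element to a greatest element $x_k$ of $\predk x$. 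Then $k$-transitivity~\eqref{d3} yields $\gel(x_k)\included\predk x$, and maximality yields the opposite inclusion $\predk x\included\gel(x_k)\cup\{x_k\}$, so $\predk x=\gel(x_k)\cup\{x_k\}$. Reading off the tuple $x$ now gives $x=F^{\universe}(x_0,\dots,x_{n-1})$ with each $x_k$ in $\gel(x)\included\vnns$, whence $x\in\vnns$ because $\vnns$ is closed under $F^{\universe}$.

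I expect the main obstacle to be this last reconstruction: one must verify that the greatest element $x_k$ extracted from directedness and well-foundedness satisfies $\predk x=\gel(x_k)\cup\{x_k\}$ exactly, so that $x$ matches the defining formula~\eqref{FV} for $F^{\universe}(x_0,\dots,x_{n-1})$ component by component, and one should note that the nullary case $n=0$ (where $\gel(x)=\emptyset$) is handled trivially by the same argument.
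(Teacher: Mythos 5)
Your proposal is correct and follows essentially the same route as the paper: the forward inclusion by closure of the ``good'' elements under each $F^{\universe}$, and the converse by $\in'$-induction (the paper phrases it as a minimal counterexample), extracting the greatest element $u_k$ of each $\predk x$ from non-limit-ness plus directedness and using $k$-transitivity to get $\predk x=\gel(u_k)\cup\{u_k\}$ exactly, so that $x=F^{\universe}(u_0,\dots,u_{n-1})$. Your added checks (that $\vnns\included\ons$ and that members of $\gel(x)$ again satisfy the defining conditions) are details the paper leaves implicit.
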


\begin{proof}
By Theorem~\ref{thm:in'-wf}, we can argue by induction.
Suppose $x_k$ meets the 
  conditions when $k<n$.  Let $z=F^{\universe}(x_0,\dots,x_{n-1})$.  Each $x_k$
  is a maximal element of $\predk z$, so $z$ is not a limit.  Each
  element of $\gel(z)$ is either $x_k$ or an element of $\gel(x_k)$ for some
  $k$ in $n$, so it is not a limit either.  Thus $z$ meets the
  conditions.

To prove the converse, suppose if possible that
$(x_0,\dots,x_{n-1},F)$ is a counterexample that is minimal with
respect to $\in'$.  Then each of the $x_k$ has a maximal element,
$u_k$, with respect to $\in'$.  But $\in'$ directs $x_k$, so $u_k$ is
the greatest element, hence $x_k\included \gel(u_k)\cup\{u_k\}$.  Also
$\gel(u_k)\included x_k$ by the
$k$-transitivity of $(x_0,\dots,x_{n-1},F)$.  Thus
\begin{equation*}
  (x_0,\dots,x_{n-1},F)=(\gel(u_0)\cup\{u_0\},\dots,\gel(u_{n-1})\cup\{u_{n-1}\},F)
=F^{\universe}(u_0,\dots,u_{n-1}).
\end{equation*}
But also, neither $u_k$ nor
any element of $\gel(u_k)$ is a limit, so $u_k\in\vnns$ by minimality
of $(x_0,\dots,x_{n-1},F)$, and
therefore $F^{\universe}(u_0,\dots,u_{n-1})\in\vnns$. 
\end{proof}


\def\cprime{$'$} \def\cprime{$'$} \def\cprime{$'$}
\providecommand{\bysame}{\leavevmode\hbox to3em{\hrulefill}\thinspace}
\providecommand{\MR}{\relax\ifhmode\unskip\space\fi MR }
\providecommand{\MRhref}[2]{%
  \href{http://www.ams.org/mathscinet-getitem?mr=#1}{#2}
}
\providecommand{\href}[2]{#2}

\end{document}